\newcommand{\OO}{\mathbf{O}}
\newcommand{\N}{\mathbb{N}}
\newcommand{\F}{\mathbb{F}}
\newcommand{\conv}{\diamond}
\newcommand{\lact}{\triangleright}
\newcommand{\ract}{\triangleleft}
\newcommand{\C}{\mathbf{C}}
\newcommand{\Com}{\operatorname{Com}}
\newcommand{\ComC}{\Com (\C)}
\newcommand{\Comag}{\operatorname{Comag}}
\newcommand{\ComagC}{\Comag (\C)}
\newcommand{\cCom}{\operatorname{cCom}}
\newcommand{\cComC}{\cCom(\C)}
\newcommand{\Set}{\mathbf{Set}}
\newcommand{\Cat}{\mathbf{Cat}}
\newcommand{\D}{\mathbf{D}}
\newcommand{\M}{\mathbf{M}}
\newcommand{\Ab}{{\mathbf{Ab}}}
\newcommand{\Aff}{{\mathbf{AffSch}}}
\newcommand{\NCS}{{\mathbf{Ring}^\op}}
\newcommand{\Abop}{\mathbf{Ab}^\mathrm{op}}
\newcommand{\ob}[1]{\mathrm{Ob}\left(#1 \right)}
\newcommand{\fset}[1]{\underline{#1}}
\DeclareMathOperator{\id}{id}
\newcommand{\op}{\text{op}}
\newcommand{\one}{\mathbf{1}}
\theoremstyle{plain}
\newtheorem{theorem}{Theorem}[section]
\newtheorem*{question}{Question}
\newtheorem{proposition}[theorem]{Proposition}
\newtheorem{lemma}[theorem]{Lemma}
\newtheorem{corollary}[theorem]{Corollary}
\theoremstyle{definition}
\newtheorem{definition}[theorem]{Definition}
\newtheorem{remark}[theorem]{Remark}
\begin{document}
\title{Clones from comonoids}
\author{Ulrich Krähmer}
\address{
    TU Dresden,
    Institute of Geometry,
    01062 Dresden,
    Germany
}
\email{ulrich.kraehmer@tu-dresden.de}
\author{Myriam Mahaman}
\address{
    TU Dresden,
    Institute of Geometry,
    01062 Dresden,
    Germany
}
\email{myriam.mahaman@tu-dresden.de}
\begin{abstract}
The fact that the cocommutative comonoids
in a symmetric monoidal category form the
best possible approximation by a 
cartesian category is revisited when 
the original category is only braided
monoidal. This leads to the question when 
the endomorphism operad of a
comonoid is a clone (a Lawvere theory). By
giving an explicit example, we prove that
this does not imply that the comonoid is
cocommutative. 
\end{abstract}
\subjclass[2020]{18M05,18M15,18M65,18M60}
\keywords{clone, cartesian
category, cocommutative comonoid}
\maketitle

\section{Introduction}
Clones are a special type of operads.
Most authors define and study
the concept in the category $\Set$ of sets in
terms of structure
maps and axioms as in
Definition~\ref{defclone}
below, see e.g.~\cite{kerkhoffShortIntroductionClones2014}. However,
if one defines operads
as multicategories with a
single object, then clones are
simply operads which, as a
multicategory, are cartesian. 
In particular, they are equivalent to 
Lawvere theories, see
e.g.~\cite{gouldCoherenceCategorifiedOperadic2008,hylandNotionLambdaMonoid2014,akhvledianiRelatingTypesCategorical2012}. 

The fact that clones are
firmly rooted in the world of
cartesian categories explains why 
they occur naturally in areas
such as logic, set
theory, discrete mathematics
and theoretical computer science.
As cartesian categories can
also be characterised in terms of
comonoid structures on their objects (see
Theorem~\ref{recognition} below), 
we were wondering whether the
endomorphism operads of
suitable comonoids in braided
monoidal categories are
also clones.  
At first, the following 
seems to indicate that one must restrict 
to cocommutative comonoids in symmetric
monoidal categories:

\begin{theorem}\label{notsogood}
Let $\C$ be a braided monoidal 
category, $\Com(\C)$ be the category of
comonoids in $\C$, and
$(X, \Delta _X, \varepsilon _X)$ be a
comonoid. 
Then the monoidal subcategory of 
$\ComC$ formed by the tensor 
powers $X^{\otimes n}$ is
cartesian if and only if $X$ is
cocommutative and the
braiding on $X$ is a symmetry. 
\end{theorem}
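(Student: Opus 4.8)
\emph{Setup.} The plan is to prove the two implications separately. Throughout, let $\D$ denote the monoidal subcategory in question, i.e.\ the full subcategory of $\ComC$ on the objects $X^{\otimes n}$, $n\ge 0$; it is braided monoidal with the braiding inherited from $\C$, since the braiding of $\C$ is always a comonoid morphism between the relevant tensor-product comonoids. I write $c_{X,X}$ for the braiding of $X$ with itself, and I will use freely that $\one=X^{\otimes 0}$ is the terminal object of $\ComC$ (the only comonoid morphism into $\one$ is the counit, as $\varepsilon_{\one}=\id_{\one}$).

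\emph{Cartesian $\Rightarrow$ cocommutative with symmetric braiding.} Assume $\D$ is cartesian. Then $X\otimes X$ is the categorical product $X\times X$ in $\D$, with projections $\rho_X\circ(\id_X\otimes\varepsilon_X)$ and $\lambda_X\circ(\varepsilon_X\otimes\id_X)$, and there is a diagonal $\delta\colon X\to X\otimes X$ in $\D$, i.e.\ a comonoid morphism in $\C$. The first step is to identify $\delta$ with $\Delta_X$: writing out that $\delta$ commutes with comultiplications and post-composing with $\id_X\otimes\varepsilon_X\otimes\varepsilon_X\otimes\id_X$ (and unitors), the left-hand side collapses to $\delta$ — using $(\varepsilon_X\otimes\varepsilon_X)\circ c_{X,X}=\varepsilon_X\otimes\varepsilon_X$ together with the projection axioms — and the right-hand side collapses to $\Delta_X$ via the counit axioms of $X$; hence $\delta=\Delta_X$. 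Now two standard facts about cartesian monoidal categories finish this direction: the inherited braiding on $\D$ is forced to be the canonical symmetry $\langle\pi_2,\pi_1\rangle$ (by naturality of the braiding against the unique maps to $\one$), so $c_{X,X}$ is a symmetry; and the diagonal is cocommutative for that symmetry, so $c_{X,X}\circ\Delta_X=\Delta_X$, i.e.\ $X$ is cocommutative.

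\emph{Cocommutative with symmetric braiding $\Rightarrow$ cartesian.} Assume $X$ is cocommutative and $c_{X,X}$ is a symmetry. I would first upgrade the braiding hypothesis: the elementary braidings $\id^{\otimes i}\otimes c_{X,X}\otimes\id^{\otimes j}$ on $X^{\otimes n}$ obey the interchange law and the Yang--Baxter equation, hence the braid relations, and by hypothesis each squares to the identity, so they satisfy the Coxeter relations of $S_n$; since every $c_{X^{\otimes a},X^{\otimes b}}$ is a word in them realizing a block transposition, one gets $c_{X^{\otimes a},X^{\otimes b}}\circ c_{X^{\otimes b},X^{\otimes a}}=\id$, i.e.\ the braiding of $\C$ restricted to the tensor powers of $X$ is a symmetry. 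In this symmetric setting $X$ is a cocommutative comonoid, and since the tensor product of cocommutative comonoids in a symmetric monoidal category is again cocommutative, every $X^{\otimes n}$ is a cocommutative comonoid, with the comonoid structure on $X^{\otimes(a+b)}$ being the tensor product of those on $X^{\otimes a}$ and $X^{\otimes b}$, and $\one=X^{\otimes 0}$ carrying the trivial one. Thus every object of $\D$ carries a comonoid structure that is natural (every morphism of $\D\subseteq\ComC$ is by definition a comonoid morphism), monoidal, and cocommutative, and $\one\in\D$ is terminal; by Theorem~\ref{recognition} — equivalently, by Fox's theorem \cite{foxCoalgebrasCartesianCategories1976}, as $\D$ is then a full subcategory, closed under finite products, of the cartesian category of cocommutative comonoids in the symmetric monoidal subcategory generated by $X$ — it follows that $\D$ is cartesian.

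\emph{Main obstacle.} The genuinely delicate part is the braiding analysis in the second implication: extracting from the two hypotheses that \emph{every} tensor power $X^{\otimes n}$ is a cocommutative comonoid. This is where one must pass, via the $S_n$-relations, from a single self-braiding that squares to the identity to a genuinely symmetric monoidal subcategory; without that, cocommutativity of $X$ alone does not propagate to $X\otimes X$. In the first implication, the only step that is not routine cartesian-category bookkeeping is the computation identifying $\Delta_X$ with the cartesian diagonal.
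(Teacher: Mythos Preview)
Your Setup contains a false claim that undermines the forward direction: it is \emph{not} true that the braiding of $\C$ is always a comonoid morphism between tensor-product comonoids. By Proposition~\ref{prop:notbraided}, $\sigma_{X,X}\colon X\otimes X\to X\otimes X$ is a comonoid morphism if and only if $\sigma_{X,X}^{2}=\id$, which is precisely the symmetry condition you are trying to \emph{derive}. So $\D$ is not known to be braided a priori, and your sentence ``the inherited braiding on $\D$ is forced to be the canonical symmetry'' has nothing to act on; both the symmetry and the cocommutativity conclusions in your forward direction pass through this circularity.

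The fix stays close to what you wrote. Your computation $\delta=\Delta_X$ is correct (it is one half of the Hilton--Eckmann argument, and that argument in fact already yields cocommutativity of $\Delta_X$ directly, without reference to any braiding on $\D$). Running the same computation for $X\otimes X$ identifies the cartesian diagonal of $X\otimes X$ with its comultiplication $\Delta_{X\otimes X}$; then the canonical symmetry of $\D$, computed \emph{as a morphism in $\C$}, is $(\pi^{2}_{XX}\otimes\pi^{1}_{XX})\circ\Delta_{X\otimes X}$, and unwinding $\Delta_{X\otimes X}=(\id\otimes c_{X,X}\otimes\id)\circ(\Delta_X\otimes\Delta_X)$ together with the counit axiom gives exactly $c_{X,X}$. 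Hence $c_{X,X}$ is an involution \emph{a posteriori}, not by assumption. With this repair your argument coincides with the paper's, which simply invokes Theorem~\ref{recognition}; your backward direction is correct and is essentially that theorem's ``if'' half.
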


In this case, the operation
\begin{equation}\label{clonestr1}
	\varphi \bullet (\psi _1,\ldots,
	\psi _m) \coloneqq 
	\varphi \circ (\psi _1 \otimes 
	\cdots \otimes \psi _m) \circ 
	\Delta^{m-1} _{X^{\otimes n}}
\end{equation}
and the morphisms
\begin{equation}\label{clonestr2}
	\pi _{i,n} \coloneqq
	\varepsilon _X \otimes 
	\cdots \otimes 
	\varepsilon _X \otimes 
	\mathrm{id} _X \otimes	
\varepsilon_X \otimes \cdots 
	\otimes \varepsilon _X 
\end{equation}
define a clone structure on the 
endomorphism operad of $X$ in   
$\ComC$.
Here, $ \varphi \colon X^{\otimes m} \to
X,\psi _j \colon X^{\otimes n} \to
X$ are comonoid morphisms and $ \Delta
_{X^{\otimes n}}$ is the canonical
comultiplication on $X^{\otimes n}$ which
gets applied $m-1$ times; see
Theorem~\ref{important2} for details.

However, the 
endomorphism
operad of $X$ can be cartesian
even if the monoidal category
generated by $X$ is not. 
We will illustrate this by
giving a simple example of a comonoid
in $\Abop$ (i.e.~of a unital associative ring) 
whose endomorphism operad becomes a clone
in the above way although it is not
cocommutative, see
Theorem~\ref{thm:EndAisaclone}. 
So we think it
is interesting to
ask:

\begin{question}
For which comonoids is the
endomorphism operad a clone?
\end{question}

The purpose of
this paper is to raise rather
than to answer this question.  
Approaching it from various
angles has not led to any necessary or
sufficient conditions for such
comonoids. Even for very small
ones, computing their
endomorphism operad explicitly 
is rather intricate, as the
example we give shows, and we
are not aware of a
general method to construct
such examples. 

Although
Theorem~\ref{notsogood} follows easily
from some standard results on cartesian
categories, we also felt it worthwhile to
write a self-contained exposition of this
fact, as examples of
endomorphism clones of cocommutative
comonoids (or of commutative monoids
viewed in the opposite category) 
and their subclones have not been explored
much.
The main ingredient in the proof is the
statement that a cartesian category is the same as a symmetric
monoidal category in which every object is
in a unique way a cocommutative comonoid. 
To the best of our knowledge,
this is originally due to Fox
\cite{foxCoalgebrasCartesianCategories1976} (see
e.g.~\cite[Theorem~4.28]{heunenCategoriesQuantumTheory2019}
for a textbook that discusses
the result). However, all the
references we are aware of start with a
symmetric monoidal category.
As we were interested in a
generalisation to braided monoidal
categories, we reformulate this result in
a way that focuses entirely on comonoids
and comonoid morphisms. The symmetry
of the braiding and the
cocommutativity of the comonoids are
viewed rather as a
side-effect:

\begin{theorem}\label{recognition}
    A monoidal category \(\D\) is
cartesian if and only if there exists a braided monoidal
    category  \(\C\) such that
    \begin{enumerate}
        \item \(\D\) is a monoidal
subcategory of the category \(\ComC\) 
and
        \item
            the counit and comultiplication
            of every object of \(\D\)
            are morphisms in  \(\D\).
    \end{enumerate}
    In this case, the canonical symmetry of \(\D\)
    is the restriction of the
braiding of \(\C\), and all comonoids in
$\D$ are cocommutative.
\end{theorem}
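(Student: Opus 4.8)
The plan is to prove the two implications separately, and to extract the two ``in this case'' assertions from the analysis of the ``if'' direction.

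For the ``only if'' direction, suppose $\D$ is cartesian; I would take $\C=\D$ itself, with its canonical symmetric (hence braided) monoidal structure, in which $I$ is terminal, $X\otimes Y$ is the categorical product, and the symmetry is $\langle\pi_2,\pi_1\rangle$. Each object $X$ of $\D$ then carries the diagonal comonoid structure, with comultiplication $\Delta_X=\langle\id_X,\id_X\rangle$ and counit the unique map $\varepsilon_X\colon X\to I$; the comonoid axioms are immediate from the universal property of products, and every morphism of $\D$ is automatically a comonoid morphism. A short diagram chase (comparing the four projections out of $X\otimes X\otimes Y\otimes Y$) shows that the comonoid structure which $X\otimes Y$ receives as a tensor product in $\ComC$ coincides with its diagonal structure, so $X\mapsto(X,\Delta_X,\varepsilon_X)$ is an injective-on-objects strong monoidal functor $\D\to\ComC$; identifying $\D$ with its image gives~(1), and~(2) holds since $\Delta_X$ and $\varepsilon_X$ exist in $\D$ by the universal property.

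For the ``if'' direction, given $\C$ and $\D$ as in the statement, I would show directly that $\D$ has a terminal object and binary products computed by the monoidal structure, so that $\D$ is cartesian. First, $I$ is terminal: for $X\in\D$ the counit $\varepsilon_X\colon X\to I$ belongs to $\D$ by~(2), and any $f\colon X\to I$ in $\D$ is a morphism of $\ComC$, hence $\varepsilon_I\circ f=\varepsilon_X$; since the unit comonoid of $\ComC$ has $\varepsilon_I=\id_I$, this forces $f=\varepsilon_X$. Next, for $X,Y\in\D$ set $p_1=\rho_X\circ(\id_X\otimes\varepsilon_Y)$, $p_2=\lambda_Y\circ(\varepsilon_X\otimes\id_Y)$, both in $\D$ because $\D$ is a monoidal subcategory and $\varepsilon_X,\varepsilon_Y\in\D$, and set $\langle f,g\rangle=(f\otimes g)\circ\Delta_Z$ for $f\colon Z\to X$, $g\colon Z\to Y$ in $\D$; this lies in $\D$ by~(2). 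That $p_1\circ\langle f,g\rangle=f$ and $p_2\circ\langle f,g\rangle=g$ follows from the counit axioms and the facts $\varepsilon_X\circ f=\varepsilon_Z$, $\varepsilon_Y\circ g=\varepsilon_Z$, which hold since $f,g$ are morphisms of $\ComC$. The key point is uniqueness: for any $h\colon Z\to X\otimes Y$ in $\D$,
\[
    h=(p_1\otimes p_2)\circ\Delta_{X\otimes Y}\circ h=(p_1\otimes p_2)\circ(h\otimes h)\circ\Delta_Z=\langle p_1 h,\,p_2 h\rangle ,
\]
where the middle step uses that $h$ is a comonoid morphism, and the first step is the identity $(p_1\otimes p_2)\circ\Delta_{X\otimes Y}=\id_{X\otimes Y}$.

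Proving this last identity is the main obstacle, and the only place the braiding enters. Writing $\beta$ for the braiding of $\C$ and unwinding $\Delta_{X\otimes Y}=(\id_X\otimes\beta_{X,Y}\otimes\id_Y)\circ(\Delta_X\otimes\Delta_Y)$, one sees that the two ``outer'' legs are contracted by the counit axioms $(\varepsilon\otimes\id)\circ\Delta=\id=(\id\otimes\varepsilon)\circ\Delta$, while $\beta_{X,Y}$ sits precisely between the two legs that get contracted and hence disappears, by naturality of the braiding together with $\beta_{I,I}=\id$; the real work is keeping track of the coherence isomorphisms in the non-strict case. Running the same computation with $p_1$ and $p_2$ transposed yields $(p_2\otimes p_1)\circ\Delta_{X\otimes Y}=\beta_{X,Y}$, so the canonical symmetry $\langle p_2,p_1\rangle$ of the now-cartesian category $\D$ is the restriction of the braiding of $\C$ (and, being a symmetry, this also shows the restricted braiding is one). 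Finally, since $\D$ is cartesian, every comonoid $(A,\delta,\eta)$ in $\D$ has $\eta$ equal to the terminal map and $\delta=\langle\id_A,\id_A\rangle$, forced by the counit axioms and uniqueness of pairings, and $\langle\id_A,\id_A\rangle$ is cocommutative for $\langle\pi_2,\pi_1\rangle$; in particular every object of $\D$, with its original comonoid structure from $\C$, is cocommutative.
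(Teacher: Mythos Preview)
Your proof is correct and follows essentially the same approach as the paper: for the ``only if'' direction you both take $\C=\D$ with its cartesian structure, and for the ``if'' direction you verify that the comonoid data give a terminal object and binary products, with the key step being the identity $(p_1\otimes p_2)\circ\Delta_{X\otimes Y}=\id_{X\otimes Y}$ and its companion $(p_2\otimes p_1)\circ\Delta_{X\otimes Y}=\beta_{X,Y}$. The only difference is organisational: the paper isolates these computations as separate results (its Theorem~2.5 and Proposition~2.9) and then invokes them, whereas you inline them directly into the proof.
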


The original version of
this result in \cite{foxCoalgebrasCartesianCategories1976} was stated as an
adjunction: passing from a symmetric
monoidal category \(\C\) to its category
\(\cComC\) of cocommutative comonoids 
defines a right adjoint to the forgetful
functor from the category of cartesian
categories to the category of symmetric
monoidal categories. 
The above theorem reflects the fact that
the forgetful
functor from cartesian to braided monoidal
categories does not have a right adjoint. 
This can be seen for example by noticing
that the direct product of cartesian
categories is also a coproduct in the
category of cartesian categories, but it
is not a coproduct in the category of
braided monoidal categories.
 
Note that there is also a left adjoint to the
forgetful functor from cartesian to
symmetric monoidal categories, which can be constructed
out of the free cartesian category and the free
symmetric monoidal category functors 
on the category $\Cat$ of categories. As
shown by Curien \cite{curienOperadsClonesDistributive2011}, the
corresponding monads on $\Cat$ extend to
the category of profunctors which is
self-dual, and if one considers the monads
as comonads, one obtains a neat uniform characterisation
of clones and symmetric operads as
monoids in their co-Kleisli categories. Using the
free monoidal category functor also yields
an analogous description of non-symmetric
operads; see also \cite{troninAbstractClonesOperads2002} for a
different unified approach to clones and
operads.

Here is a brief outline of the 
paper: the aim of 
Section~\ref{comonoidssec} is to discuss
the relation
between cartesian structures on categories
and comonoid structures on their objects,
starting from semi-cartesian categories (monoidal
categories whose unit object is terminal)
and ending with the proofs of 
Theorems~\ref{notsogood} and
\ref{recognition}.
 The following
Section~\ref{multicatsec} 
recalls the definition of a clone as a
cartesian operad, the action of the
category of finite cardinals
on clones, and the clone
structure on the endomorphism
operad of an object in a
cartesian category. 
Up to here, the paper is rather expository
and does not contain novel results. Its
main new contribution is the example
discussed in detail in the final 
Section~\ref{examplessec} of the paper. 
Here we will consider unital associative,
but not necessarily commutative rings as
comonoids in $ \Abop$. In particular, we give the
example of a noncocommutative
comonoid whose endomorphism
operad is a clone in
Theorem~\ref{thm:EndAisaclone}.
We also prove at the end of 
Section~3 that this phenomenon
can never occur for comonoids
that are Hopf monoids, see
Proposition~\ref{hopf}.  

Throughout the paper we assume the reader is
familiar with basic category
theory including the definitions of
monoidal, braided monoidal and
symmetric monoidal categories
as given e.g.\ in
~\cite{heunenCategoriesQuantumTheory2019}. To shorten the
presentation, we assume
that all monoidal
categories are strict.

\subsection*{Acknowledgements} 
We thank the referees of this
paper for their suggestions,
corrections, and questions. 
Ulrich Krähmer is supported
by the DFG grant
``Cocommutative comonoids''
(KR 5036/2-1).
 
\section{Cocommutative comonoids and Cartesian categories}  
\label{comonoidssec}
Throughout this section,
\((\C,\otimes,\one)\) is a
(strict) monoidal
category. 
The main goal is to 
recall the definition 
of cartesian categories and 
their characterisation in
terms of cocommutative
comonoid structures on their
objects. 

\subsection{Semi-cartesian and cartesian categories}
We begin by considering 
semi-cartesian
categories.
\begin{definition}
One calls \(\C\) 
\emph{semi-cartesian}
if \(\one\) is terminal.
\end{definition}

\begin{proposition}\label{udine}
    A monoidal category is semi-cartesian
    if and only if it admits 
a natural
    transformation \(\varepsilon_X \colon X \to \one\) 
    such that \(\varepsilon_{\one} = \id_{\one}\).
In particular, \(\C\) 
admits at most one such
natural transformation. 
\end{proposition}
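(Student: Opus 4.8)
The plan is to prove both directions of the equivalence, with the "only if" direction being essentially trivial and the "if" direction requiring a modest verification. For the "if" direction: suppose $\C$ admits a natural transformation $\varepsilon_X \colon X \to \one$ with $\varepsilon_{\one} = \id_{\one}$. I want to show $\one$ is terminal, i.e.\ that for every object $X$ the component $\varepsilon_X$ is the \emph{unique} morphism $X \to \one$. Existence is given, so the work is uniqueness. Given any morphism $f \colon X \to \one$, I would apply naturality of $\varepsilon$ to the square for $f \colon X \to \one$: this gives $\varepsilon_{\one} \circ f = \id_{\one} \circ f = f$ on one side, and $f \circ \varepsilon_X$... wait, more carefully, naturality says $\varepsilon_{\one} \circ f = f \circ \varepsilon_X$? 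No --- the naturality square for $f\colon X \to \one$ has top edge $f$, bottom edge $\id_\one$ (the map $\one \to \one$), left edge $\varepsilon_X$, right edge $\varepsilon_\one$. So it reads $\varepsilon_\one \circ f = \id_\one \circ \varepsilon_X$, i.e.\ $f = \varepsilon_X$ using $\varepsilon_\one = \id_\one$. That is exactly uniqueness. So this direction is genuinely one line once the naturality square is drawn correctly; the only subtlety is being careful about which square to use.

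For the "only if" direction: suppose $\one$ is terminal. Then for each $X$ there is a unique morphism $X \to \one$; call it $\varepsilon_X$. I must check this family is natural and that $\varepsilon_\one = \id_\one$. Naturality in $f \colon X \to Y$ amounts to $\varepsilon_Y \circ f = \varepsilon_X$, which holds because both sides are morphisms $X \to \one$ and $\one$ is terminal, so they coincide. Likewise $\varepsilon_\one = \id_\one$ since both are morphisms $\one \to \one$ and terminality forces equality. The "in particular" clause (at most one such natural transformation) follows from either direction: if two natural transformations $\varepsilon, \varepsilon'$ both satisfy the condition, then by the "if" direction each makes $\one$ terminal, and each component is forced to be the unique map to the terminal object, hence $\varepsilon = \varepsilon'$. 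Alternatively one can observe directly that the argument in the "if" direction shows any such $\varepsilon$ has $\varepsilon_X$ equal to \emph{any} given morphism $X \to \one$, so in particular equal to $\varepsilon'_X$.

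I do not expect any real obstacle here --- this is a routine unwinding of the definition of terminal object and of naturality. The one place to be careful is the direction of the naturality square and not accidentally needing the monoidal structure: none of the monoidal data ($\otimes$, associators, unitors) is used, only that $\one$ is a distinguished object and that we have an abstract category; the monoidal hypothesis merely supplies the object $\one$ that the statement refers to. If I wanted to emphasize the conceptual content, I would remark that a "natural transformation $X \to \one$" is precisely a cone over the identity functor with apex $\one$, i.e.\ a natural family exhibiting $\one$ as a weakly terminal object, and the normalization $\varepsilon_\one = \id_\one$ upgrades "weakly terminal" to "terminal"; but for a clean short proof the explicit naturality-square argument above suffices.
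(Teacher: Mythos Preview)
Your proof is correct: the naturality square for any $f\colon X \to \one$ gives $\varepsilon_\one \circ f = \varepsilon_X$, hence $f = \varepsilon_X$ once $\varepsilon_\one = \id_\one$, and the converse and uniqueness follow immediately from terminality. The paper itself does not give a proof but simply cites \cite[Proposition~4.15]{heunenCategoriesQuantumTheory2019}, so there is no in-paper argument to compare against; your direct argument is the standard one and would be what that reference contains.
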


\begin{proof}
    See \cite[Proposition 4.15]{heunenCategoriesQuantumTheory2019}.
\end{proof}

\begin{definition}
Let \(\C\) be
semi-cartesian.
\begin{enumerate}
\item The natural transformation 
 \(\varepsilon\) from
Proposition~\ref{udine} is
called the \emph{uniform
deletion} in \(\C\).
\item For any pair of objects 
\(X\) and \(Y\), we denote by
\[ 
	\pi ^1_{XY} \colon 
	X \otimes Y \rightarrow 
	X,\quad
	\pi ^2_{XY} \colon 
	X \otimes Y \rightarrow Y
\]
the \emph{canonical projections}
given by
\[
\begin{tikzcd}
    X &
    X \otimes Y \arrow[l, "\id_{X} \otimes \varepsilon_{Y}"']
    \arrow[r, "\varepsilon_{X} \otimes \id_{Y}"] &
    Y.
\end{tikzcd}
\] 
\end{enumerate}
\end{definition}

In general, these canonical projections do
not have the universal property that 
makes $(X \otimes Y,\pi^1_{XY},\pi
^2_{XY})$ a product of $X$ and $Y$ in 
\(\C\). 

\begin{definition}\label{defcartes}
A \emph{cartesian}
category is a semi-cartesian
category \(\C\) in which  
for any objects \(X,Y\) in  \(\C\),
            the triple  
            \((X \otimes Y, 
	\pi ^1_{XY} , \pi^2_{XY})\)
            is a categorical product 
of \(X\) and \(Y\) in  \(\C\),
so that
for any pair of morphisms \(f \colon 
Z \to X\) and  \(g \colon Z \to Y\),
there is a unique morphism  
\[
	f * g \colon Z \to 
	X \otimes Y
\] 
making the following diagram commutative:
\[
\begin{tikzcd}
    & 
    Z 
    \arrow[rd, "g"] 
    \arrow[ld, "f"'] 
    \arrow[d, dashed, "f*g"] &
    \\
    X &
    X \otimes Y 
    \arrow[r, "\pi^{2}_{XY}"'] 
    \arrow[l, "\pi^{1}_{XY}"] & 
    Y
\end{tikzcd}.
\]  
\end{definition}

\begin{remark}
The above definition is usually
referred to as \emph{cartesian monoidal}
in the literature in order to
distinguish it from the other
uses of the term ``cartesian category''.

\end{remark}


We now shift 
the perspective on this
property: rather than fixing
$X,Y$, we fix an object $Z$ and
show that the universal 
property of $X \otimes Y$
hinges, for morphisms with
domain $Z$, on 
the existence of a
counital comagma
structure on $Z$:
\begin{definition}
A \emph{comagma} in a monoidal
category $\C$ is an object $Z$
together with a morphism 
$ \Delta  \colon Z \rightarrow
Z \otimes Z$. A \emph{counital
comagma} is a comagma together
with a morphism $ \varepsilon
\colon Z \rightarrow
\one $ rendering            
the following diagram commutative:
           \[
                \begin{tikzcd}
                & Z 
                \arrow[d,
"\Delta"] 
                \arrow[ld, "\id_Z"'] 
                \arrow[rd, "\id_Z"] &   \\
                Z & 
                Z \otimes Z 
                \arrow[l,
"\id_Z \otimes \varepsilon"] 
                \arrow[r, "\varepsilon \otimes \id_Z"'] & 
                Z
                \end{tikzcd}
           \] 
\end{definition}

The following lemma addresses
the existence part of the 
universal property of a
categorical product; 
the uniqueness and
naturality will be discussed
afterwards.

\begin{lemma}\label{lem:existcounital}
   Let \(\C\) be a
semi-cartesian category 
   with uniform deletion
\(\varepsilon\).
If a morphism \( \Delta  \colon 
	Z \to Z \otimes Z\)
is counital with respect
to $ \varepsilon _Z$, then 
the maps
\begin{align*}
	*_{X,Y} \colon 
	\C(Z,X) \times 
	\C(Z,Y) &\rightarrow 
	\C (Z,X \otimes Y),
	\\
	(f,g) &\mapsto 
	f*g:=(f \otimes g) \circ
\Delta 
\end{align*}
make the following
            diagrams commutative:
\begin{equation}\label{counitalcomagma}
\begin{tikzcd}
    & 
    Z 
    \arrow[rd, "g"] 
    \arrow[ld, "f"'] 
    \arrow[d, "f*g"] &
    \\
    X &
    X \otimes Y 
    \arrow[r, "\pi^{2}_{XY}"'] 
    \arrow[l, "\pi^{1}_{XY}"] & 
    Y
\end{tikzcd}.
\end{equation}          
This establishes a bijection
between counital comagma
structures on $Z$ and natural
transformations 
$$
	\C(Z,-) \times 
	\C(Z,-) \rightarrow 
	\C(Z,- \otimes -) 
$$
with this property. 
\end{lemma}

\begin{proof}
If $ \Delta $ is counital,
then $ *_{X,Y}$ has the
desired property since 
(\ref{counitalcomagma}) 
expands to
$$
\begin{tikzcd}
& Z 
\arrow[d,
"\Delta"] 
\arrow[ld, "\id_Z"'] 
\arrow[rd, "\id_Z"] &   \\
Z  
\arrow[d, "f"'] &
Z \otimes Z 
\arrow[d, "f \otimes g"] 
\arrow[r, "\pi^{2}_{ZZ}"'] 
\arrow[l, "\pi^{1}_{ZZ}"] & 
Z
\arrow[d, "g"] 
\\
X &
X \otimes Y 
\arrow[r, "\pi^{2}_{XY}"'] 
\arrow[l, "\pi^{1}_{XY}"] & 
Y
\end{tikzcd}.
$$
The naturality of $
*_{X,Y}$ follows
immediately from its
definition. Conversely, 
$ \Delta $ is recovered from
$*$ as $ \Delta = \mathrm{id}
_Z * \mathrm{id} _Z$. 
\end{proof}

Just as we did for semi-cartesian categories,
 we are going to characterise cartesian categories
 in terms of some natural transformations.

 \begin{theorem}
     \label{thm:cartmon_nattrans}
    A semi-cartesian category \(\C\) 
    is cartesian 
    if and only if there exists
    a counital natural transformation
    \(\Delta_{X} \colon X \to X \otimes X \)
    such that
    for any two objects
\(X,Y\), we have
            \begin{equation}
                \label{eq:deltaeps_id}
                (\pi^{1}_{XY}
                \otimes 
                \pi^{2}_{XY})
                \circ \Delta_{X \otimes Y}
                = \id_{X} \otimes \id_{Y}.
            \end{equation}
If such a natural
transformation exists, it is
unique. 
\end{theorem}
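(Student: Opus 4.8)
The plan is to prove both implications by reducing everything to the existence-of-products lemma (Lemma~\ref{lem:existcounital}) on one side and to naturality of $\Delta$ on the other.

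\emph{Cartesian $\Rightarrow$ existence of $\Delta$.} I would set $\Delta_X \coloneqq \id_X * \id_X$, the diagonal coming from the product structure, so that by definition $\pi^1_{XX}\circ\Delta_X = \id_X = \pi^2_{XX}\circ\Delta_X$; since $\pi^1_{XX} = \id_X\otimes\varepsilon_X$ and $\pi^2_{XX} = \varepsilon_X\otimes\id_X$, this is exactly counitality. Naturality of $\Delta$ follows from the uniqueness clause of Definition~\ref{defcartes}: for $f\colon X\to Y$ both $\Delta_Y\circ f$ and $(f\otimes f)\circ\Delta_X$ become $f$ after postcomposition with each projection (using naturality of $\varepsilon$), hence coincide. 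For \eqref{eq:deltaeps_id} I would again appeal to uniqueness of the product $X\otimes Y$: postcomposing $(\pi^1_{XY}\otimes\pi^2_{XY})\circ\Delta_{X\otimes Y}$ with $\pi^1_{X\otimes Y, X\otimes Y}=\id_{X\otimes Y}\otimes\varepsilon_{X\otimes Y}$ and simplifying — the counits collapse because $\varepsilon_X\otimes\varepsilon_Y=\varepsilon_{X\otimes Y}$ by Proposition~\ref{udine}, and $\Delta_{X\otimes Y}$ is counital — yields $\pi^1_{XY}$, and symmetrically for $\pi^2$; so the composite must equal $\id_{X\otimes Y}$.

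\emph{Existence of $\Delta$ $\Rightarrow$ cartesian.} Fix objects $X,Y$ and morphisms $f\colon Z\to X$, $g\colon Z\to Y$. Since $\Delta_Z\colon Z\to Z\otimes Z$ is counital, Lemma~\ref{lem:existcounital} produces a morphism filling the product diagram; explicitly one takes $f*g\coloneqq(f\otimes g)\circ\Delta_Z$. The real content is uniqueness, and here \eqref{eq:deltaeps_id} is precisely what is needed: if $h\colon Z\to X\otimes Y$ satisfies $\pi^1_{XY}\circ h=f$ and $\pi^2_{XY}\circ h=g$, then by naturality of $\Delta$ followed by \eqref{eq:deltaeps_id},
\[
h=\bigl(\pi^1_{XY}\otimes\pi^2_{XY}\bigr)\circ\Delta_{X\otimes Y}\circ h=\bigl(\pi^1_{XY}\otimes\pi^2_{XY}\bigr)\circ(h\otimes h)\circ\Delta_Z=(f\otimes g)\circ\Delta_Z,
\]
so $h$ is determined by $f$ and $g$ alone.

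All the calculations are routine; the only steps demanding care are the strictness-and-interchange bookkeeping needed to simplify the left-hand side of \eqref{eq:deltaeps_id} in the first implication, and — the conceptual crux of the second implication — the realisation that naturality of $\Delta$ together with \eqref{eq:deltaeps_id} forces every candidate $h$ to equal $(f\otimes g)\circ\Delta_Z$.
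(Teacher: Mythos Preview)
Your argument is correct and matches the paper's proof almost line for line: both directions use the same definitions ($\Delta_X := \id_X * \id_X$), the same naturality-via-uniqueness verification, and the identical chain $h = (\pi^1_{XY}\otimes\pi^2_{XY})\circ\Delta_{X\otimes Y}\circ h = \cdots = (f\otimes g)\circ\Delta_Z$ for uniqueness in the converse. One small slip: in checking \eqref{eq:deltaeps_id} you should postcompose with $\pi^1_{XY} = \id_X\otimes\varepsilon_Y$ (a map $X\otimes Y\to X$), not with $\pi^1_{X\otimes Y,X\otimes Y} = \id_{X\otimes Y}\otimes\varepsilon_{X\otimes Y}$, which does not type-check against a morphism with codomain $X\otimes Y$; the paper phrases this step equivalently by directly observing that $\id_X\otimes\id_Y$ already satisfies $\pi^1_{XY}\circ(-) = \pi^1_{XY}$ and $\pi^2_{XY}\circ(-) = \pi^2_{XY}$, hence equals $\pi^1_{XY}*\pi^2_{XY}$.
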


 \begin{proof}
``\(\Rightarrow \)'':
     Assume \(\C\) is cartesian. 
     For each object \(Z\),
     set 
\[
	\Delta_Z \coloneqq \id_Z *
\id_Z.
\]
     This morphism is counital
by construction. Furthermore, 
     Lemma~\ref{lem:existcounital}
implies that
     \(f*g = (f \otimes g) \circ \Delta_Z\)
     holds for any \(f \colon Z \to X\) and \(g \colon Z \to Y\).

     Let us show that the family of morphisms
     \((\Delta_Z)_{Z \in \ob{\C}}\) 
     is a natural transformation.
     Given a morphism \(f \colon X \to Y\),
     we deduce from the following
     two commutative diagrams
      \[
     \begin{tikzcd}
         &
         X 
         \arrow[ldd, bend right = 20, "f"'] 
         \arrow[rdd, bend left = 20, "f"] 
         \arrow[d,"f"] &
         \\
         &
         Y 
         \arrow[ld, "\id_{Y}"']
         \arrow[rd, "\id_{Y}"]
         \arrow[d, "\Delta_{Y}"] &
         \\
         Y &
         Y \otimes Y
         \arrow[l, "\pi^{1}_{YY}"]
         \arrow[r, "\pi^{2}_{YY}"'] &
         Y
     \end{tikzcd}
     \qquad \text{and}\qquad
     \begin{tikzcd}[row sep = large]
         &
         X
         \arrow[ldd, bend right = 20, "f"'] 
         \arrow[rdd, bend left = 20, "f"] 
         \arrow[dd, "(f \otimes f) \circ \Delta_{X}"] &
         \\
         &&\\
         Y &
         Y \otimes Y
         \arrow[l, "\pi^{1}_{YY}"]
         \arrow[r, "\pi^{2}_{YY}"'] &
         Y
     \end{tikzcd}
     \] 
     that
    \(
        \Delta_{Y} \circ f 
        = f * f 
        = (f \otimes f) \circ \Delta_{X}
        \),
        hence \(\Delta\) is indeed 
        a natural transformation.

Finally, (\ref{eq:deltaeps_id}) 
holds, as the diagram
     \[
         \begin{tikzcd}[row sep = huge]
         &
         X \otimes Y
         \arrow[ld, bend right = 20, "\pi^{1}_{XY}"']
         \arrow[rd, bend left = 20, "\pi^{2}_{XY}"]
         \arrow[d, "\id_X \otimes \id_Y"] &
         \\
         X &
         X \otimes Y
         \arrow[l, "\pi^{1}_{XY}"]
         \arrow[r, "\pi^{2}_{XY}"'] &
         Y
     \end{tikzcd},
     \] 
commutes
    for any  two objects
\(X,Y\) in  \(\C\),
which means that 
     \[
     \id_X \otimes \id_Y
     = \pi^{1}_{XY}
     *
     \pi^{2}_{XY}
     = (\pi^{1}_{XY}
     \otimes
     \pi^{2}_{XY})
     \circ \Delta_{X \otimes
Y}.
     \]

``\(\Leftarrow\)'': Let us now prove the
converse, so assume there exists a counital 
     natural transformation
     \(\Delta\) satisfying
(\ref{eq:deltaeps_id}).
     By Lemma~\ref{lem:existcounital},
     the morphism \((f \otimes g) \circ \Delta_Z\)
     satisfies the universal property
     for any objects \(X,Y,Z\) and morphisms 
     \(f \colon Z \to X\) and \(g \colon Z \to Y\).
     Let us show 
     that this morphism is unique.
     Let \(h \colon Z \to X \otimes Y\) be a morphism such that
     \(\pi^{1}_{XY} \circ h = f\)
     and \(\pi^{2}_{XY} \circ h = g\).
     Then we have
     \begin{align*}
         h & = 
         (\id_{X} \otimes \id_{Y}) 
         \circ h \\
          & = 
         (\pi^{1}_{XY}
    \otimes
    \pi^{2}_{XY})
     \circ \Delta_{X \otimes Y} 
     \circ h \\
        & = 
        (\pi^{1}_{XY}
     \otimes 
     \pi^{2}_{XY})
     \circ 
     (h \otimes h) \circ \Delta_{Z} \\
           & = 
           [(\pi^{1}_{XY} \circ h)
           \otimes 
           (\pi^{2}_{XY} \circ h)]
     \circ \Delta_{Z} \\
           & = (f \otimes g) \circ
\Delta_{Z}.
     \end{align*} 
In particular, taking $ f = g
= \mathrm{id} _Z$ shows that $
\Delta _Z$ is unique. 
 \end{proof}

Especially in the context of
theoretical computer science,
this is often rephrased 
by saying that a cartesian
category is a semi-cartesian category
with a natural
\emph{uniform copying}
operation $ \Delta $. For
example, the no-cloning
theorem (quantum computers cannot 
copy information \cite[Theorem
4.27]{heunenCategoriesQuantumTheory2019})
implies that the monoidal
category of finite-dimensional Hilbert
spaces is
not cartesian.


\subsection{Categories of counital comagmas}
We now study the category of counital comagmas further.  
\begin{definition}
    A \emph{morphism of comagmas}
    \(f \colon (X,\Delta_{X}) \to (Y, \Delta_{Y})\)
     is a morphism \(f \colon
X \to Y\) in \(\C\)
     which makes the following diagram commutative:
     \[
     \begin{tikzcd}
          X 
          \ar[d, "f"']
          \ar[r, "\Delta_{X}"]
          &
          X \otimes X 
          \ar[d, "f \otimes f"]
          \\
          Y 
          \ar[r, "\Delta_{Y}"']
          &
          Y \otimes Y.
     \end{tikzcd}
     \]
    If
 \(X\)  and \(Y\) are counital, with counits
     \(\varepsilon_{X}\) and \(\varepsilon_{Y}\) respectively,
     then \(f\) is \emph{counital} if
     the following diagram is commutative:
     \[
         \begin{tikzcd}
         X 
         \ar[r,"f"]
         \ar[dr,"\varepsilon_{X}"']
         &
         Y
         \ar[d,"\varepsilon_{Y}"]
         \\
           &
        \one.
         \end{tikzcd}
     \]
We denote the category of all
counital comagmas by 
\(\ComagC\). 
\end{definition}


Note that in general, the
tensor product of two comagmas
carries no canonical comagma
structure. However, 
\(\ComagC\) becomes monoidal
if $ \C$ is
braided monoidal 
(see
e.g.~\cite[Definition
1.17]{heunenCategoriesQuantumTheory2019}
for more background on braided
monoidal categories):  

\begin{definition}
A \emph{braiding} on $\C$ is 
a natural isomorphism
$$
	\sigma _{X,Y} \colon 
	X \otimes Y \rightarrow Y
	\otimes X
$$ 
which is monoidal
both in $X$ and $Y$, that is,
for which 
\begin{align*}
	\sigma _{X \otimes Y,Z} 
&=
	(\sigma _{X,Z} \otimes
\id_Y) \circ 
	(\id_X \otimes
\sigma_{Y,Z}) \\
	\sigma _{X,Y \otimes Z} 
&=
	(\id_Y \otimes
\sigma_{X,Z}) \circ 
	(\sigma _{X,Y} \otimes
\id_Z)  
\end{align*}
holds. A \emph{symmetric monoidal
category} is a braided
monoidal category whose
braiding is a \emph{symmetry},
meaning that 
for all objects \(X\) and
\(Y\), we have  
 \[
	\sigma_{Y,X} \circ \sigma_{X,Y} = 
	\id_{X \otimes Y}.
\] 
\end{definition}

From now on we assume that 
$\C$ is braided monoidal. 

\begin{proposition}
The braiding $
\sigma $ on $\C$
induces a unique monoidal structure
on $\ComagC$ such that the
forgetful functor 
$\ComagC \to \C$ is strict
monoidal. 
\end{proposition}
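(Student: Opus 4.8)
The plan is to prove existence by writing the structure down explicitly and uniqueness by tracking what strict monoidality forces.

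\emph{Existence.} For comonoids $(X,\Delta_X,\varepsilon_X)$ and $(Y,\Delta_Y,\varepsilon_Y)$ I would put
\[
\Delta_{X\otimes Y}\coloneqq(\id_X\otimes\sigma_{X,Y}\otimes\id_Y)\circ(\Delta_X\otimes\Delta_Y),\qquad\varepsilon_{X\otimes Y}\coloneqq\varepsilon_X\otimes\varepsilon_Y,
\]
take as unit object $\one$ with $\Delta_{\one}=\varepsilon_{\one}=\id_{\one}$, and take the associativity and unit constraints of $\ComC$ to be those of $\C$, i.e.\ identities, since $\C$ is strict. With this recipe the forgetful functor sends tensor products to tensor products on the nose, hence is strict monoidal tautologically, and the coherence axioms of $\ComC$ hold as soon as one knows the constraint isomorphisms of $\C$ are comonoid morphisms. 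The genuine content is threefold. First, $(X\otimes Y,\Delta_{X\otimes Y},\varepsilon_{X\otimes Y})$ must be a comonoid: counitality follows from counitality of $X$ and $Y$ together with $\sigma_{\one,Z}=\sigma_{Z,\one}=\id_Z$ and naturality of $\sigma$ (which lets one absorb a counit that crosses a strand), while coassociativity is the main calculation — one expands both iterated comultiplications and uses coassociativity of $\Delta_X,\Delta_Y$, naturality of $\sigma$, and the two hexagon identities to bring all braidings into matching position. Second, the constraint isomorphisms of $\C$ being comonoid morphisms between the iterated tensor products is a further, closely related, hexagon computation. Third, the tensor product of two comonoid morphisms is again one, i.e.\ the families $\varepsilon_{-\otimes-}$ and $\Delta_{-\otimes-}$ are natural; for the counit this is immediate and for the comultiplication it is precisely naturality of $\sigma$ in each variable. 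I would carry all of this out in string-diagram notation.

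\emph{Uniqueness.} Let $(\otimes',I',\dots)$ be any monoidal structure on $\ComC$ making $U$ strict monoidal. Strict monoidality forces $\otimes'$ to agree with $\otimes$ on underlying objects and morphisms, forces $I'$ to have underlying object $\one$, and — since $U$ sends each constraint isomorphism to an identity and $U$ is faithful — forces the associativity and unit constraints of $\otimes'$ to be identities. What remains is the comonoid structure carried by a tensor product $X\otimes Y$. By functoriality of $\otimes'$, the morphisms $\varepsilon_X\otimes'\id_Y\colon X\otimes'Y\to I'\otimes'Y=Y$ and $\id_X\otimes'\varepsilon_Y\colon X\otimes'Y\to X\otimes'I'=X$ are comonoid morphisms, and by strict monoidality their underlying maps are $\varepsilon_X\otimes\id_Y$ and $\id_X\otimes\varepsilon_Y$. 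The counit condition for the first already forces $\varepsilon_{X\otimes'Y}=\varepsilon_X\otimes\varepsilon_Y$; the comultiplication conditions for the two of them, together with counitality and coassociativity of $\Delta_{X\otimes'Y}$, then force $\Delta_{X\otimes'Y}$ to restrict to $\Delta_X$ and $\Delta_Y$ on the two factors and to $\sigma_{X,Y}$ on the crossed factor, which is the displayed formula. Hence $\otimes'$ is the structure constructed above.

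\emph{Main obstacle.} The only real labour is the coassociativity check, together with the nearly identical check that the associator is a comonoid morphism: both are medium-length string-diagram manipulations that must interleave the two hexagon axioms with naturality of the braiding in the correct order, and keeping track of which strand crosses which is the thing easiest to get wrong. Everything else is formal, with strictness of $\C$ swallowing the coherence. The one step in the uniqueness half that needs care is the last one — seeing that the two projection conditions together with coassociativity genuinely pin $\Delta_{X\otimes'Y}$ down rather than merely constrain it — and this is where the braiding axioms, not just bare compatibility of the projections, have to be used.
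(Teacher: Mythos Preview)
Your existence argument is correct and is exactly what the paper does, only spelled out in more detail: the paper's entire proof consists of the words ``this is well-known'' together with the string diagrams for $\varepsilon_{X\otimes Y}$ and $\Delta_{X\otimes Y}$, which are precisely your displayed formulas.

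Your uniqueness argument, however, has a genuine gap, and in fact the strong uniqueness you are attempting to prove is false. If $\sigma$ is not a symmetry then the reverse braiding $\bar\sigma_{X,Y}\coloneqq\sigma_{Y,X}^{-1}$ is a second braiding on $\C$, and it induces a \emph{different} comonoid structure on $X\otimes Y$ (with $\sigma_{Y,X}^{-1}$ in place of $\sigma_{X,Y}$), hence a second monoidal structure on $\ComC$ for which the forgetful functor is strict monoidal. The specific failure in your argument is the assertion that the projection conditions together with counitality and coassociativity ``force $\Delta_{X\otimes'Y}$ to restrict \dots\ to $\sigma_{X,Y}$ on the crossed factor'': none of the constraints you list mentions $\sigma$, so they cannot single out $\sigma_{X,Y}$ over $\sigma_{Y,X}^{-1}$. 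You yourself flag this as the step ``where the braiding axioms \dots\ have to be used'', but there is no route by which they can enter. A second, independent problem is that even if all four counit-projections of $\Delta_{X\otimes'Y}$ were known, this would not determine $\Delta_{X\otimes'Y}$ unless $X\otimes Y\otimes X\otimes Y$ behaved like a categorical product, which in a general monoidal category it does not.

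The paper's ``unique'' should therefore be read weakly --- as ``canonical'', or as the trivial observation that once the comonoid structure on each $X\otimes Y$ is fixed via $\sigma$, strictness forces the associator and unitors to be identities --- and its proof accordingly addresses only existence. Your existence half already matches and suffices.
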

\begin{proof}
This is well-known; 
the tensor product $X \otimes Y$
 of two counital comagmas
$(X,\Delta _X, \varepsilon _X)$ and 
$(Y, \Delta _Y , \varepsilon _Y)$ becomes
a counital comagma whose counit 
and comultiplication are 
given as follows: 
\begin{align}
    \varepsilon_{X \otimes Y} & \coloneqq  
    \varepsilon_{X} \otimes \varepsilon_{Y},
    \quad \text{and} \\
    \Delta_{X \otimes Y} &  \coloneqq 
    \left( 
       \id_{X} \otimes \sigma_{X,Y} \otimes \id_{Y}
    \right) 
       \otimes 
       \left( 
         \Delta_{X} \otimes \Delta_{Y}
     \right) \label{tensorcomagma}
.\qedhere
\end{align}
\end{proof}
 
We have moved some proofs to
the appendix, where we use the
standard graphical calculus of
string diagrams. For example,
the above counit and comultiplication
on $X \otimes Y$ are given in 
(\ref{counittensor}). 

\begin{proposition}
    The category \(\ComagC\)
    is semi-cartesian. 
\end{proposition}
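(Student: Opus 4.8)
The plan is to invoke Proposition~\ref{udine}. Since $\ComC$ is a monoidal category whose unit object is $\one$ equipped with the trivial comonoid structure $(\one,\id_\one,\id_\one)$, it suffices to exhibit a natural transformation whose component at an object $X$ is a morphism $X\to\one$ in $\ComC$ and whose component at $\one$ is $\id_\one$. The natural candidate is the family of counits $(\varepsilon_X)_{X}$ themselves, now viewed as morphisms in $\ComC$.

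First I would check that each counit $\varepsilon_X\colon X\to\one$ is in fact a morphism of comonoids. Compatibility with the counits is immediate, since $\id_\one\circ\varepsilon_X=\varepsilon_X$. Compatibility with the comultiplications amounts (using $\one\otimes\one=\one$ and $\Delta_\one=\id_\one$) to the identity $(\varepsilon_X\otimes\varepsilon_X)\circ\Delta_X=\varepsilon_X$, which follows from counitality: $(\varepsilon_X\otimes\varepsilon_X)\circ\Delta_X=\varepsilon_X\circ(\id_X\otimes\varepsilon_X)\circ\Delta_X=\varepsilon_X$. Next I would observe that naturality of $(\varepsilon_X)$ in $\ComC$ is nothing but the defining property of a comonoid morphism: for any $f\colon(X,\Delta_X,\varepsilon_X)\to(Y,\Delta_Y,\varepsilon_Y)$ in $\ComC$ one has $\varepsilon_Y\circ f=\varepsilon_X$ by definition, which is precisely the required naturality square. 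Finally $\varepsilon_\one=\id_\one$ by construction of the unit comonoid. Proposition~\ref{udine} then yields that $\one$ is terminal in $\ComC$, i.e.\ that $\ComC$ is semi-cartesian, and moreover that this $\varepsilon$ is the unique such natural transformation, so the uniform deletion of $\ComC$ is simply the one inherited from $\C$.

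There is no serious obstacle here; the only point requiring a moment's care is pinning down the monoidal unit of $\ComC$ and verifying that $\varepsilon_\one=\id_\one$ holds for it, which is immediate once one recalls that the unit comonoid carries the trivial structure $\Delta_\one=\id_{\one\otimes\one}=\id_\one$. Alternatively, one can bypass Proposition~\ref{udine} and argue directly that $\one$ (with the trivial comonoid structure) is terminal in $\ComC$: the computation above shows $\varepsilon_X$ is a comonoid morphism $X\to\one$, and conversely any comonoid morphism $f\colon X\to\one$ satisfies $f=\id_\one\circ f=\varepsilon_\one\circ f=\varepsilon_X$ by compatibility with counits, so it is the unique one.
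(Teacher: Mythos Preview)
Your proof is correct and follows essentially the same approach as the paper: both show that the counits $\varepsilon_X$ are comonoid morphisms to $\one$ and assemble into a uniform deletion, with the paper stating this more tersely while you spell out the verifications and invoke Proposition~\ref{udine} explicitly.
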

\begin{proof}
The unit object $\one$ is easily seen to
carry a unique structure of a counital comagma, and     
the counit 
    \(\varepsilon_{X} \colon X \to \one\)
of any counital comagma
is a morphism of counital comagmas.
    This defines a uniform deletion 
in $\ComagC$. 
\end{proof}

Here are two properties of the
tensor product of comagmas
that will be used later:

\begin{proposition}\label{twopropertiesprop}
    Let \(X,Y\) be two counital comagmas
in $\C$.
    The following equalities 
    hold in \(\C\):
    \begin{enumerate}
        \item \label{delta_identity}
    \(
        [(\id_X \otimes \varepsilon_Y) 
       \otimes 
       (\varepsilon_X \otimes \id_Y)]
       \circ
       \Delta_{X \otimes Y}
        = \id_X \otimes \id_Y.
    \)
       \item
           \(
        [ (\varepsilon_X \otimes \id_Y)
       \otimes 
       (\id_X \otimes \varepsilon_Y)]
       \circ
       \Delta_{X \otimes Y}
       = \sigma_{X,Y}.
           \)

    \end{enumerate}
\end{proposition}
\begin{proof}
See (\ref{twoproperties}) in the appendix.
\end{proof}




So we have seen that for a 
braided monoidal category
$\C$, the category $\ComagC$
is monoidal. However, 
in general it is not braided
monoidal:

\begin{proposition}\label{braidedmonoidalcomag}
    Let \(X,Y\) be two counital comagmas in  \(\C\).
    The braiding  \(\sigma_{X,Y}: X \otimes Y \to Y \otimes X\)
     is a morphism of counital comagmas 
     if and only if \(\sigma_{Y,X} = \sigma_{X,Y}^{-1} \).
\end{proposition}
\begin{proof}
    See p\pageref{prop:notbraided}
    in the appendix.
\end{proof}


\subsection{Cocommutative comonoids}

In order to prepare for the
characterisation of cartesian
categories we collect here
everything we need to know
about cocommutative comonoids.

\begin{definition}
We denote by $\ComC \subseteq
\ComagC$ the full subcategory
of \emph{comonoids}, that is, 
of counital comagmas 
\((X, \Delta, \varepsilon)\)
for which $\Delta$ is
\emph{coassociative},
$$
	( \Delta \otimes
\mathrm{id} _Z ) \circ \Delta
= 
	(\mathrm{id} _Z \otimes
\Delta ) \circ \Delta .
$$
We
denote by 
$\cComC \subseteq \ComC$ the
full subcategory of comonoids
which are
\emph{cocommutative},
\[
  \Delta = \sigma_{X,X} \circ \Delta.
\]
\end{definition} 

We thus 
have the following sequence of forgetful functors 
\begin{equation}
    \label{inclusion}
    \cComC \subseteq \ComC \subseteq \ComagC \to \C,
\end{equation}
where the two inclusions are full.

\begin{proposition}
\(\ComC\) is a semi-cartesian subcategory of \(\ComagC\).
\end{proposition}
\begin{proof}
The unit object is a comonoid,
and \(\ComC\) contains all the terminal morphisms
since it is a full subcategory.
Finally, it is straightforward
to verify that the tensor
product 
of two comonoids in 
$\ComagC$ is again 
coassociative.
\end{proof}

The main ingredient
for the
proof of Theorem~\ref{recognition}
is the next proposition which
is known as the
\emph{Eckmann-Hilton
argument}. In the introduction
we said that we view
cocommutativity
rather as a
side-effect, and this is where 
this statement becomes
manifest:
\begin{proposition}\label{hiltoneckmann}
  Let 
  \(
    X_{1} \coloneqq~(X, \Delta_{1}, \varepsilon_{1})
    \) and 
\(X_{2} \coloneqq~(X, \Delta_{2}, \varepsilon_{2})
  \)
   be two counital comagma structures
   defined
   on the same underlying object \(X\).
   If \(\Delta_{2}\) is a morphism of
   comagmas \(X_{1}
\rightarrow X_{1} \otimes X_{1}\),
   then 
\[
	\varepsilon_{1} =
	\varepsilon_{2}, 
	\quad
   \Delta_{1} =
   \Delta_{2},
\]
      and \(X_1=X_2\)
      is a cocommutative
comonoid.
\end{proposition}
\begin{proof}
    See
p\pageref{eckmannhiltonproof} in the
appendix.
\end{proof}

In other words, we have
\[
	\Comag(\ComagC) = \cComC.
\]

The following is a direct corollary 
of the Eckmann-Hilton argument: 
\begin{corollary}
  \label{comag_comag}
  Let \(X\) be a counital comagma.
  The comultiplication  \(\Delta_{X}\) 
  is a morphism of counital comagmas if and only if
  \(X\) is cocommutative.
  In this case,  \((X, \Delta_{X}, \varepsilon_{X})\) 
  is the unique counital comagma structure on \(X\)
  in  \(\ComagC\).
\end{corollary}
\begin{proof}
The ``only if'' direction follows directly from the Eckmann-Hilton
argument. The ``if'' part is shown 
in Proposition \ref{prop:hilton-eckmann-appendix2}
in the appendix.
\end{proof}


Thus $\cComC$ is the category
of counital comagmas in a
monoidal category which is not
braided. Viewed from this
perspective, the following 
extension of 
Proposition~\ref{braidedmonoidalcomag}
from comagmas to cocommutative
comonoids is rather natural,
albeit surprising at first (as
pointed out by 
Baez
\cite[Lemma~3]{baezHochschildHomologyBraided1994}):

\begin{proposition}
  \label{tensor_cocommutative}
    The tensor product 
    of two cocommutative comonoids
    \(X,Y\)
    is cocommutative if and only if
    \(\sigma_{Y,X} = \sigma_{X,Y}^{-1}\).
\end{proposition}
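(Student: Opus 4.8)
The plan is to compute the comultiplication on $X \otimes Y$ explicitly using the formula established in the proposition on the monoidal structure of $\ComC$, and compare it to what cocommutativity of $X \otimes Y$ demands. Recall that $\Delta_{X \otimes Y} = (\id_X \otimes \sigma_{X,Y} \otimes \id_Y) \circ (\Delta_X \otimes \Delta_Y)$, i.e.\ graphically one comultiplies each factor and then swaps the inner two strands using $\sigma_{X,Y}$. Cocommutativity of $X \otimes Y$ means $\sigma_{X \otimes Y, X \otimes Y} \circ \Delta_{X \otimes Y} = \Delta_{X \otimes Y}$, where $\sigma_{X \otimes Y, X \otimes Y} \colon (X \otimes Y) \otimes (X \otimes Y) \to (X \otimes Y) \otimes (X \otimes Y)$ is the full braiding of the composite objects, which by the hexagon axioms decomposes into a composite of four instances of $\sigma$ between $X$'s and $Y$'s. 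So the strategy is to expand both sides as string diagrams built from $\Delta_X$, $\Delta_Y$, and various instances of $\sigma$, then use cocommutativity of $X$ and of $Y$ individually (i.e.\ $\sigma_{X,X} \circ \Delta_X = \Delta_X$ and $\sigma_{Y,Y} \circ \Delta_Y = \Delta_Y$) to simplify.

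First I would draw out $\Delta_{X \otimes Y}$ and the braided composite $\sigma_{X \otimes Y, X \otimes Y} \circ \Delta_{X \otimes Y}$ as string diagrams on four strands (two $X$'s and two $Y$'s in the codomain). For the ``$\Leftarrow$'' direction, assuming $\sigma_{Y,X} = \sigma_{X,Y}^{-1}$, I would slide the $\sigma_{X,Y}$ appearing inside $\Delta_{X \otimes Y}$ past the braidings coming from $\sigma_{X \otimes Y, X \otimes Y}$ using naturality and the hexagon identities; the hypothesis $\sigma_{Y,X} \circ \sigma_{X,Y} = \id$ lets the ``extra'' crossing of an $X$-strand with a $Y$-strand cancel, and then cocommutativity of $X$ and $Y$ collapses the remaining $\sigma_{X,X}$ and $\sigma_{Y,Y}$ that sit on top of the comultiplications. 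The result should be exactly $\Delta_{X \otimes Y}$ again, proving $X \otimes Y$ is cocommutative.

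For the ``$\Rightarrow$'' direction I would argue contrapositively or by isolating a component: post-compose the assumed identity $\sigma_{X \otimes Y, X \otimes Y} \circ \Delta_{X \otimes Y} = \Delta_{X \otimes Y}$ with a suitable tensor of counits $\varepsilon_X, \varepsilon_Y$ to kill two of the four output legs, choosing the legs so that the $\Delta_X, \Delta_Y$ parts disappear (since $\Delta$ is counital) and only a braiding between one $X$ and one $Y$ survives on each side. On the right this yields something like $\id_{X \otimes Y}$ (or $\sigma_{X,Y}$ precomposed with its formal inverse), while on the left the two braidings from $\sigma_{X \otimes Y, X \otimes Y}$ together with the internal $\sigma_{X,Y}$ reduce to $\sigma_{Y,X} \circ \sigma_{X,Y}$ acting where we'd expect $\id$; comparing gives $\sigma_{Y,X} \circ \sigma_{X,Y} = \id_{X \otimes Y}$, i.e.\ $\sigma_{Y,X} = \sigma_{X,Y}^{-1}$. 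To make ``a suitable tensor of counits isolates the braiding'' legitimate I would invoke the earlier proposition stating $[(\varepsilon_X \otimes \id_Y) \otimes (\id_X \otimes \varepsilon_Y)] \circ \Delta_{X \otimes Y} = \sigma_{X,Y}$, which is precisely the tool that extracts $\sigma_{X,Y}$ from $\Delta_{X \otimes Y}$, and apply it after composing with the braiding.

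The main obstacle will be bookkeeping the braiding of the \emph{composite} object $\sigma_{X \otimes Y, X \otimes Y}$ correctly: unwinding it via the hexagon axioms into four elementary crossings and keeping track of which strands cross in which order is where sign-free but error-prone diagram manipulation lives. Once that decomposition is written down cleanly, the cancellations driven by the hypothesis $\sigma_{Y,X} = \sigma_{X,Y}^{-1}$ and by the individual cocommutativities are routine string-diagram moves (naturality squares and the defining relation of cocommutativity), so I expect the bulk of the real work to be in drawing the correct initial diagrams rather than in any conceptual leap. As with the neighbouring braiding proposition, it may be cleanest to relegate the full diagrammatic verification to the appendix and state here only the strategy plus the key reduction.
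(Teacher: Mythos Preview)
Your proposal is correct and matches the paper's approach: for $\Rightarrow$ the paper post-composes the cocommutativity identity with $\id_X \otimes \varepsilon_Y \otimes \varepsilon_X \otimes \id_Y$ and uses naturality of $\sigma$ together with the formula $[(\varepsilon_X \otimes \id_Y)\otimes(\id_X \otimes \varepsilon_Y)]\circ\Delta_{X\otimes Y}=\sigma_{X,Y}$ to extract $\sigma_{Y,X}\circ\sigma_{X,Y}=\id$, and for $\Leftarrow$ it carries out exactly the string-diagram simplification you describe. The only organisational difference is that the paper routes the argument through a slightly more general appendix lemma (Proposition~\ref{prop:tens-prop-cocommutative}) characterising cocommutativity of $X\otimes Y$ for arbitrary comonoids via three conditions, the first two of which become automatic once $X$ and $Y$ are individually cocommutative.
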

\begin{proof}
    This is a direct corollary of 
    Proposition~\ref{prop:tens-prop-cocommutative}.
\end{proof}



\subsection{Recognition theorem}

We will
now discuss the canonical
symmetry on a cartesian
category and then 
prove Theorems~\ref{notsogood} 
and~\ref{recognition}. 
 
\begin{proposition}
     \label{cor:cart_braiding}
    If \(\C\) is cartesian,
then the natural transformation
     \[
	\sigma_{X,Y} \coloneqq 
     \pi^{2}_{XY}
     *
     \pi^{1}_{XY}
\]
is the
unique braiding on \(\C\), and
is a symmetry.
\end{proposition}
 \begin{proof}
     The morphisms
     \(\sigma_{X,Y} \) are defined using
     the composition and tensor products
     of natural
transformations.
     They are therefore natural in $X$ and $Y$.
Using the universal property,
it is straightforward to
verify that
     \(\sigma\) is a symmetry. 

      If \(\tau\) is any braiding
     on \(\C\),
     then unit constraints force
     the diagram
\[
  \begin{tikzcd}
  & X \otimes Y 
  \ar[dl, "\pi^{2}_{XY}"']
  \ar[d,"\tau_{X,Y}"]
  \ar[dr, "\pi^{1}_{XY}"]
  & \\
    Y &
    Y \otimes X 
  \ar[l, "\pi^{1}_{YX}"]
  \ar[r, "\pi^{2}_{YX}"']
      &
    X
  \end{tikzcd}
\]
to commute, hence 
\(\tau_{X,Y} = \pi^{2}_{XY} * \pi^{1}_{XY} = \sigma_{X,Y}\)
by the universal property.
 \end{proof}

\begin{proposition}
\label{deltamonoidal}
  Let \(\C\) be cartesian,
  \(\Delta\) be the uniform
copying, and  
  \(\sigma\) be the canonical symmetry.
  Then, for any  \(X,Y \in \ob{\C}\) we have
  \[
    \Delta_{X \otimes Y} = 
    \left( 
       \id_{X} \otimes \sigma_{X,Y} \otimes \id_{Y}
    \right) 
       \otimes 
       \left( 
         \Delta_{X} \otimes \Delta_{Y}
       \right).
  \]
  
\end{proposition}
\begin{proof}
Consider the following diagram:
\[
  \begin{tikzcd}[row sep = large, column sep = large]
  & X \otimes Y 
  \ar[ld, "\id_{X \otimes Y}"'] 
  \ar[rd, "\id_{X \otimes Y}"] 
  \ar[d, "\Delta_{X} \otimes \Delta_{Y}"]
  & \\
    X \otimes Y 
  \ar[d, "\id_{X \otimes Y}"']
  & (X \otimes X) \otimes (Y \otimes Y) 
  \ar[l, "\pi_{XX}^{1} \otimes \pi_{YY}^{1}"] 
  \ar[r, "\pi_{XX}^{2} \otimes \pi_{YY}^{2}"'] 
  \ar[d, "\id_{X} \otimes \sigma_{X,Y} \otimes \id_{Y}"]
  & X \otimes Y 
  \ar[d, "\id_{X \otimes Y}"] \\
    X \otimes Y &
    (X \otimes Y) \otimes (X \otimes Y) 
    \ar[r, "\pi_{X \otimes Y, X \otimes Y}^{1}"'] 
    \ar[l, "\pi_{X \otimes Y, X \otimes Y}^{2}"] 
    & X \otimes Y
  \end{tikzcd}.
\]
The first row in this diagram commutes because of the universal property of \(\Delta_{X}\) and \(\Delta_{Y}\),
whereas the second row commutes because of the universal property of \(\sigma_{X,Y}\).
The morphisms along the border of this diagram 
actually form the diagram defining the universal property of \(\Delta_{X \otimes Y}\):
\[
 \begin{tikzcd}[row sep = large, column sep = large]
  & 
   X \otimes Y 
   \ar[ld, "\id_{X \otimes Y}"']
   \ar[rd, "\id_{X \otimes Y}"]
   \ar[d, dashed, "\Delta_{X \otimes Y}"]
  & \\
   X \otimes Y & 
   (X \otimes Y) \otimes (X \otimes Y) 
   \ar[l, "\pi^{1}_{X \otimes Y, X \otimes Y}"]
   \ar[r, "\pi^{2}_{X \otimes Y, X \otimes Y}"']
  & 
   X \otimes Y
 \end{tikzcd}, 
\]
thus the equality
\[
    \Delta_{X \otimes Y} = 
    \left( 
       \id_{X} \otimes \sigma_{X,Y} \otimes \id_{Y}
    \right) 
       \otimes 
       \left( 
         \Delta_{X} \otimes \Delta_{Y}
       \right).
	\qedhere
\]
\end{proof}


\begin{proposition}
\label{cartesian_comagmas}
A braided monoidal category 
is cartesian if and only if
the forgetful functor
 \(\ComagC \to \C\) is an isomorphism. 
In this case, we have
\[
  \cComC = \ComC = \ComagC \cong \C.
\]
\end{proposition}
\begin{proof}
``\(\Rightarrow\)'' 
Suppose \( \C \) is cartesian.
Then the uniform copying \(\Delta\) and uniform deletion \(\varepsilon\)
induce a functor  \(G \colon \C \to \ComagC\)
\[
  G(X) \coloneqq (X, \Delta_{X}, \varepsilon_{X}),
  \quad
  G(f) \coloneqq f
\]
which is a section of the forgetful functor
\(\ComagC \to \C\).

The previous proposition tells us 
that \(G\) is strict monoidal.
Then for any counital comagma 
\((X,\delta,\epsilon)\) in \(\C\),
its image \(\big(G(X), \delta, \varepsilon\big)\)
 is a counital comagma in  \(\ComagC\).
It follows from Corollary~\ref{comag_comag} 
that the structure maps \(\delta\) and  \(\epsilon\) 
must be equal to those of
of \(G(X) = (X, \Delta_{X}, \varepsilon_{X})\).
Hence \(G(X)\) is the unique counital comagma structure on \(X\),
and \(G\) is an isomorphism. 

Another consequence of
Corollary~\ref{comag_comag} 
is that \(G(X)\)
is a cocommutative comonoid for each \(X \in \ob{\C}\),
therefore
\[
  \cComC = \ComC = \ComagC \cong \C.
\]

``\(\Leftarrow\)'' 
Suppose the forgetful functor \(\ComagC \to \C\)
is an isomorphism.
Since \(\ComagC\) is semi-cartesian, then so is \(\C\).

For each \(X\) in \(\C\), 
let us denote by  \((X,\Delta_{X},\varepsilon_{X})\) 
the image of \(X\)
by the inverse of the forgetful functor  \(\ComagC \to \C\).
Then, the maps \(X \mapsto \Delta_{X}\) 
and \(X \mapsto \varepsilon_{X}\)
induce natural transformations
which
satisfy all the conditions 
in Theorem \ref{thm:cartmon_nattrans},
thus \(\C\) is cartesian.
\end{proof}

As a consequence, 
in a cartesian category,
every object 
     is in a unique and
canonical way a
cocommutative comonoid, and
     every morphism in $\C$ 
is a morphism of comonoids.


From now on, we assume that
\(\C\) is braided monoidal
with a fixed braiding  \(\sigma\).
Our goal is to characterise the subcategories of
\(\ComagC\) which are cartesian.

\begin{proposition}
\label{cartesian_subcategory}
A monoidal subcategory \(\D\) of \(\ComagC\) is cartesian
if and only if for any object \((X,\Delta,\varepsilon)\)
in  \(\D\), the morphisms  \(\Delta\) and \(\varepsilon\)
are in  \(\D\).
\end{proposition}
\begin{proof}
``\(\Leftarrow\)''
The maps which associate to each comonoid its
comultiplication and counit induce a uniform copying
and deletion in \(\D\), thus \(\D\) is cartesian.

``\(\Rightarrow\)''
Suppose \(\D\) is cartesian,
and let \((X,\Delta,\varepsilon)\) be an object in  \(\D\).
Since \(\D\) is cartesian, then  \((X,\Delta,\varepsilon)\)
is a counital comagma in  \(\D\).
We know however from
Corollary~\ref{comag_comag},
that this comagma structure is uniquely given by  \(\Delta\) and
\(\varepsilon\),
thus  these two morphisms are in \(\D\).
\end{proof}

It follows that neither \(\ComagC\)
nor  \(\ComC\) are cartesian in general.
Even though  \(\cComC\) satisfies the conditions
    in the proposition, it is not cartesian either,
since it is not closed under the tensor product in general.
We have however the following:

\begin{corollary}
    If \(\C\) is symmetric monoidal,
    then \(\cComC\) is cartesian.
\end{corollary}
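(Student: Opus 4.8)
The plan is to deduce this directly from the recognition theorem (Theorem~\ref{recognition}), applied with the candidate cartesian category $\D := \Cocom(\C)$ and with the ambient braided monoidal category taken to be $\C$ itself, which is braided since it is symmetric. Thus the entire proof reduces to verifying the two hypotheses of Theorem~\ref{recognition} for this choice.

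First I would check hypothesis (1), that $\Cocom(\C)$ is a monoidal subcategory of $\ComC$. It is a full subcategory by definition, it contains the unit object $\one$ (which carries a unique, and trivially cocommutative, comonoid structure), and --- this is the one place where symmetry enters --- it is closed under the tensor product: by the proposition preceding this corollary, the tensor product of two cocommutative comonoids $X,Y$ is again cocommutative precisely when $\sigma_{Y,X} = \sigma_{X,Y}^{-1}$, and this holds for all objects when $\sigma$ is a symmetry. Hence $\Cocom(\C)$ inherits the monoidal structure of $\ComC$ and the inclusion is strict monoidal.

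Next I would check hypothesis (2), that for every object $X$ of $\D$ both $\varepsilon_X \colon X \to \one$ and $\Delta_X \colon X \to X \otimes X$ are morphisms in $\D$. The counit $\varepsilon_X$ is always a morphism of comonoids --- this is exactly how the uniform deletion on $\ComC$ was defined --- and its target $\one$ lies in $\D$, so $\varepsilon_X$ is a morphism in $\D$. For the comultiplication, the Hilton-Eckmann proposition says that when $X$ is cocommutative the map $\Delta_X$ is a morphism in $\ComC$; and by hypothesis (1), just verified, its target $X \otimes X$ is again cocommutative, hence lies in $\D$. Therefore $\Delta_X$ is a morphism in $\D$ as well.

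With both hypotheses in hand, Theorem~\ref{recognition} gives that $\D = \Cocom(\C)$ is cartesian, and along the way confirms that its canonical symmetry is the restriction of $\sigma$. I do not expect a genuine obstacle: all the work has been done in the preceding propositions, and the only point requiring care is to remember that the closure of $\Cocom(\C)$ under $\otimes$ truly needs $\sigma$ to be a symmetry --- this is precisely the step that fails in the merely braided case and which forces one, in general, to pass to smaller subcategories of $\Cocom(\C)$ as in Theorem~\ref{recognition}.
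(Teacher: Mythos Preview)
Your proposal is correct and follows exactly the paper's approach: apply Theorem~\ref{recognition} after noting that symmetry makes $\Cocom(\C)$ a monoidal subcategory of $\ComC$. The paper's proof is a terse two-line version of what you wrote; your additional detail (invoking the tensor-product-of-cocommutatives proposition for closure and the Hilton--Eckmann proposition for $\Delta_X$ being a comonoid morphism) simply spells out what the paper leaves implicit.
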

\begin{proof}
    If \(\sigma\) is a symmetry,
    then  \(\cComC\) is a monoidal
    subcategory of  \(\ComC\),
    and hence satisfies the
    conditions in 
Proposition~\ref{cartesian_subcategory}.
\end{proof}

This induces the adjunction
    between symmetric monoidal categories 
    and cartesian categories described
originally in~\cite{foxCoalgebrasCartesianCategories1976}.

We are now ready to prove Theorems~\ref{notsogood} and~\ref{recognition}.

\begin{proof}[Proof of
Theorem~\ref{notsogood}]
Let \(X\) be a comonoid
and \(\D\) be the full subcategory of  \(\ComC\)
whose objects are the tensor powers of  \(X\).
By construction, the category \(\D\) is monoidal.

Proposition \ref{cartesian_subcategory} tells us
that \(\D\) is cartesian if and only if \(\Delta_{X^{\otimes n}}\)
and \(\varepsilon_{X^{\otimes n}}\) are in \(\D\)
for all  \(n \ge 0\).
Since \(\D\) is full, it contains all
the terminal morphisms
\(\varepsilon_{X^{\otimes n}}\) 
and the previous condition becomes equivalent
to \(\Delta_{X^{\otimes n}}\) 
being a morphism of comonoids for all \(n \ge 0\).
This means that \(\Delta_{X^{\otimes n}}\) 
has to be cocommutative for
all \(n \ge 0\) (Corollary~\ref{comag_comag}),
and this in turn is equivalent to \(\Delta_{X}\) being cocommutative
and \(\sigma_{X,X} = \sigma_{X,X}^{-1}\) (Proposition
\ref{tensor_cocommutative}).
\end{proof}


\begin{proof}[Proof of
Theorem~\ref{recognition}]
The ``if'' statement 
follows immediately from 
Proposition~\ref{cartesian_subcategory},
and the ``only if'' statement from
Proposition ~\ref{cartesian_comagmas}.
\end{proof}

\section{Cartesian operads and
clones}\label{multicatsec}
We retain the assumption that $\C$ is a
strict monoidal category. 
Our main goal in this section is to
establish an equivalence between cartesian
operads and clones.

\subsection{Operads}
We will define operads as certain
multicategories. We refer 
to~\cite{leinsterHigherOperadsHigher2004}
for further information. 

\begin{definition}
A \emph{(plain) multicategory} $\M$
consists of
\begin{enumerate}
\item a class $\ob{\M}$ of
\emph{objects}, 
\item for any
$A_1,\ldots,A_n,B \in \ob \M$ 
a class $\M(A_1,\ldots,A_n;B)$ of
\emph{morphisms} and 
\item for any
$A_{11},\ldots,A_{nm_n},B_1,\ldots,B_n,C
\in \ob \M$ of
a \emph{composition}
\begin{align*}
	& \M(B_1,\ldots,B_n;C) \times 
	\M(A_{11},A_{12},\ldots,A_{1m_1}; 
	B_1) \times \cdots \\
	& \quad \cdots \times
	\M(A_{n1},A_{n2},\ldots,A_{nm_n}; 
	B_n) \\
	& \quad \quad \longrightarrow 
	\M(A_{11},A_{12},\ldots,A_{nm_n};
	C)\\
& (\varphi , \psi _1,\ldots,\psi_n) 
	\mapsto \varphi \circ (\psi
_1,\ldots,\psi _n),  
\end{align*}
\item for each $A \in \ob\M$ an 
\emph{identity morphism} 
$ \mathrm{id} _A \in \M(A;A)$
\end{enumerate} 
subject to the obvious associativity and
identity axioms. 
\end{definition}
\begin{definition}
    A \emph{(plain) operad} is a
multicategory $\OO$ with a single object
$X$. 
In this case, we denote 
\( \OO( \underbrace{X, \ldots, X}_{n
\text{ times}}; X)\) simply 
by \(\OO_n\).
\end{definition}

Given a collection of objects 
$M_0 \subseteq \ob \C$ (where $\C$ is as
elsewhere a monoidal category), we define 
the multicategory 
\(\M\) associated to \(M_0\) as follows: 
    \begin{enumerate}
        \item its class of objects is
\(\ob \M \coloneqq M_0\)
        \item for \(A_1,\ldots, A_n,B \in M_0\)
             \[
                 \M(A_1,\ldots,A_n; B)
                 \coloneqq \C(A_1 \otimes
\cdots \otimes A_n, B),
            \] 
        \item the composition is given by
            \(
                \varphi \circ (\psi_1,\ldots,\psi_n)
                \coloneqq 
                \varphi \circ (\psi_1
\otimes \cdots \otimes \psi_n),
            \) and
\item the identity morphisms are those
from $\C$.
    \end{enumerate}
    If \(M_0 = \ob{\C}\), then \(\M\) is called
    the \emph{underlying multicategory} of \(\C\).
    If \(M_0 = \{X\}\),
    then \(\M\) is called
    the \emph{endomorphism operad} of \(X\).

Before we define the analogue of
cartesian categories for multicategories,
we will need to introduce some notions
related to maps between finite sets.

\subsection{The category of finite cardinals}
Let \(\F\) be the category of finite cardinals,
whose objects are the finite sets
\(\fset{n} \coloneqq \{1, \ldots, n\}\),
\(n \in \N\),
and whose morphisms are all possible
maps between these sets.

Addition induces a strict monoidal structure on  \(\F\)
that we denote by \(\oplus\), where the
unit object is the empty set \(\fset 0 = \emptyset\).
Given two morphisms \(f_1 \colon \fset m_1 \to \fset n_1\)
and \(f_2 \colon \fset m_2 \to \fset n_2\),
their tensor product is 
\[
    f_1 \oplus f_2: \fset m_1 \oplus \fset m_2
\to \fset n_1 \oplus \fset n_2
\]
where
\[
    (f_1 \oplus f_2)(i) 
    \coloneqq 
    \begin{cases}
        f_1(i) & 1 \le i \le m_1,\\
	f_2(i-m_1) + n_1 & 
        m_1 < i \le m_1+m_2.
    \end{cases}
\]

We are actually interested in
the opposite category \(\F^\op\). We will
call a morphism $f \colon \fset n \to
\fset m$ in $\F^\op$ a \emph{selection} to clearly
distinguish it from the corresponding map 
$\fset m \to \fset n$; 
we picture $f$ as a way to select an
$m$-tuple with entries taken from a given
$n$-tuple, as in
Figure~\ref{figureselections}. 

\begin{figure}[h]
    \captionsetup[subfigure]{labelformat=empty}
    \tikzset{every picture/.style={scale=0.3, every picture/.style={}}}
    \begin{subfigure}[b]{0.3\textwidth}
    \centering
    \begin{tikzpicture}
\tikzset{
    every node/.style={
        inner sep=0pt,
        minimum size=6pt
    },
    Input/.style={
        fill=black,
        shape=circle,
        label={left:#1}
    },
    Output/.style={
        fill=black,
        shape=circle,
        label={right:#1}
    }
}
\node[Input=1] (0) at (-2, 1) {};
\node[Input=2] (1) at (-2, -1) {};
\node[Output=1] (2) at (1, 2) {};
\node[Output=1] (3) at (1, 0) {};
\node[Output=2] (4) at (1, -2) {};
\draw (0) to (2);
\draw (0) to (3);
\draw (1) to (4);
\end{tikzpicture} 
\caption{\(f \colon \fset{2} \to \fset{3}\)}
    \end{subfigure}
    \hfill
    \begin{subfigure}[b]{0.3\textwidth}
    \centering
    \begin{tikzpicture}
\tikzset{
    every node/.style={
        inner sep=0pt,
        minimum size=6pt
    },
    Input/.style={
        fill=black,
        shape=circle,
        label={left:#1}
    },
    Output/.style={
        fill=black,
        shape=circle,
        label={right:#1}
    }
}
\node[Input=1] (0) at (-2, 2) {};
\node[Input=2] (1) at (-2, 0) {};
\node[Input=3] (2) at (-2, -2) {};
\node[Output=2] (3) at (1, 2) {};
\node[Output=1] (4) at (1, 0) {};
\node[Output=2] (5) at (1, -2) {};
\draw  (1) to (3);
\draw  (1) to (5);
\draw  (0) to (4);
\end{tikzpicture}
     
\caption{\(g \colon \fset{3} \to \fset{3}\)}
    \end{subfigure}
    \hfill
    \begin{subfigure}[b]{0.3\textwidth}
    \centering
    \begin{tikzpicture}
\tikzset{
    every node/.style={
        inner sep=0pt,
        minimum size=6pt
    },
    Input/.style={
        fill=black,
        shape=circle,
        label={left:#1}
    },
    Output/.style={
        fill=black,
        shape=circle,
        label={right:#1}
    }
}
		\node[Input=1] (0) at (-2, 1) {};
		\node[Input=2] (1) at (-2, -1) {};
		\node[Output=1] (2) at (1, 2) {};
		\node[Output=1] (3) at (1, 0) {};
		\node[Output=1] (4) at (1, -2) {};
		\draw  (0) to (2);
		\draw  (0) to (4);
		\draw  (0) to (3);
\end{tikzpicture}
     
    \caption{\(g \circ f\)}
    \end{subfigure}
    \caption{Selections}\label{figureselections}
\end{figure}

For each \(n \in \N\), we denote by
\(\varepsilon_{\fset n} \colon
\fset n \to \fset 0 \)
the selection corresponding to the empty
map and by \(\Delta_{\fset n} \colon
\fset n \to \fset n \oplus \fset n\) the
selection which selects the tuple 
$(1,2,\ldots,n,1,2,\ldots,n)$. 
These morphisms define 
a uniform deletion \(\varepsilon\) 
and a uniform copying \(\Delta\) 
on the category \(\F^\op\),
making this category cartesian.

\subsection{The substitution product}

We assume in this subsection that 
\(\C\) is a cartesian category.
For each \(1 \le i \le n\),
the \(i\)-th canonical projection
\(X_1 \otimes \cdots \otimes X_n \to X_i\)
is given by
\[
\pi^{i}_{X_1, \ldots, X_n}
\coloneqq 
\varepsilon_{X_1}
\otimes \cdots \otimes
\varepsilon_{X_{i-1}}
\otimes \id_{X_i} \otimes
\varepsilon_{X_{i+1}}
\otimes \cdots \otimes
\varepsilon_{X_n}.
\] 
Given a selection 
\(f \colon \fset n \to \fset m\),
we use these projections to construct 
a canonical morphism
\[
    \pi^{(f)}_{X_1, \ldots, X_n}
    \colon
    X_1 \otimes \cdots \otimes X_n
    \to
    X_{f[1]} \otimes \cdots \otimes X_{f[m]},
\] 
by setting 
\[
    \pi^{(f)}_{X_1, \ldots, X_n}
    \coloneqq 
    \pi^{f[1]}_{X_1, \ldots, X_n}
    * \cdots *
    \pi^{f[m]}_{X_1, \ldots, X_n},
\] 
where \(*\) is as in
Definition~\ref{defcartes} and $f[i]$
denotes the \(i\)-th entry of the
tuple corresponding to the selection 
\(f \colon \fset n \to \fset m\).

The following is verified by direct
computation: 

\begin{proposition}
    The operations $ \pi $ have the following properties:
    \begin{enumerate}
        \item \(\pi^{(\id_{\fset n})}_{X_1, \ldots, X_n}
                = \id_{X_1 \otimes \cdots \otimes X_n}\),
            \item \(\pi^{(g \circ f)}_{X_1, \ldots, X_n}
                = \pi^{(g)}_{X_{f[1]},
\ldots, X_{f[m]}}
                \circ \pi^{(f)}_{X_1, \ldots, X_n}\)
        \item
        \(\pi^{(f_1 \oplus f_2)}_{X_1, \ldots, X_{n_1},
        Y_1,\ldots,Y_{n_2}}
        = \pi^{(f_1)}_{X_1,\ldots, X_{n_1}}
        \otimes \pi^{(f_2)}_{Y_{1},\ldots,Y_{n_2}}\)
        \item 
            \(\pi^{(\Delta_{\fset n})}
            _{X_1,\ldots, X_n} = 
            \Delta_{X_1 \otimes \cdots \otimes X_n}\)
        \item 
            \(\pi^{(\varepsilon_{\fset n})}
            _{X_1,\ldots,X_n}
            = \varepsilon_{X_1\otimes 
            \cdots \otimes X_n}.\)
        \item 
            \(\pi^{(\pi^{i} _{\fset 1, \ldots, \fset 1})}
            _{X_1, \ldots, X_n}
            = \pi^i_{X_1, \ldots, X_n}\)
    \end{enumerate}
\end{proposition}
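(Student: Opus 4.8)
The plan is to reduce all six identities to the universal property of finite products in the cartesian category \(\C\). The underlying remark is that for objects \(Z_1,\ldots,Z_k\) of \(\C\) the tensor product \(Z_1 \otimes \cdots \otimes Z_k\) is their categorical product with projections \(\pi^j_{Z_1,\ldots,Z_k}\); consequently the iterated pairing \(h_1 * \cdots * h_k\) is unambiguously defined (the binary pairing \(*\) being associative by uniqueness, and \(\C\) being strict), with the conventions that a one-fold pairing is the morphism itself and a zero-fold pairing is the unique morphism to the terminal object \(\one\), and that a morphism into \(Z_1 \otimes \cdots \otimes Z_k\) is determined by its composites with the \(\pi^j\). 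First I would isolate the single computation on which all six parts rest: unwinding \(\pi^{(f)}_{X_1,\ldots,X_n} = \pi^{f[1]}_{X_1,\ldots,X_n} * \cdots * \pi^{f[m]}_{X_1,\ldots,X_n}\) and invoking \(\pi^j \circ (h_1 * \cdots * h_m) = h_j\) gives
\[
    \pi^j_{X_{f[1]},\ldots,X_{f[m]}} \circ \pi^{(f)}_{X_1,\ldots,X_n} = \pi^{f[j]}_{X_1,\ldots,X_n}
\]
for every selection \(f \colon \fset n \to \fset m\) and every \(1 \le j \le m\). I would also record that \(\varepsilon_Z \colon Z \to \one\) is the unique morphism into \(\one\) and that \(\Delta_Z = \id_Z * \id_Z\).

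Items (1), (4), (5) and (6) then amount to reading off the selections in question. For (6), the selection \(\pi^i_{\fset 1,\ldots,\fset 1} \colon \fset n \to \fset 1\) corresponds to the map \(\fset 1 \to \fset n\) with value \(i\), so \(\pi^{(\pi^i_{\fset 1,\ldots,\fset 1})}_{X_1,\ldots,X_n}\) is the one-fold pairing \(\pi^i_{X_1,\ldots,X_n}\). For (1), \(\id_{\fset n}\) corresponds to the identity map, so by the displayed identity the \(j\)-th projection of \(\pi^{(\id_{\fset n})}_{X_1,\ldots,X_n}\) is \(\pi^j_{X_1,\ldots,X_n}\), which is also the \(j\)-th projection of \(\id_{X_1 \otimes \cdots \otimes X_n}\); the two morphisms agree by uniqueness. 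For (5), \(\varepsilon_{\fset n} \colon \fset n \to \fset 0\) corresponds to the empty map, so the codomain is the empty tensor product \(\one\) and \(\pi^{(\varepsilon_{\fset n})}_{X_1,\ldots,X_n}\) is the unique morphism \(X_1 \otimes \cdots \otimes X_n \to \one\), namely \(\varepsilon_{X_1 \otimes \cdots \otimes X_n}\). For (4), \(\Delta_{\fset n}\) corresponds to the map with value tuple \((1,\ldots,n,1,\ldots,n)\), so \(\pi^{(\Delta_{\fset n})}_{X_1,\ldots,X_n}\) is \((\pi^1 * \cdots * \pi^n) * (\pi^1 * \cdots * \pi^n)\), which by item (1) and associativity of \(*\) equals \(\id_{X_1 \otimes \cdots \otimes X_n} * \id_{X_1 \otimes \cdots \otimes X_n} = \Delta_{X_1 \otimes \cdots \otimes X_n}\).

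Items (2) and (3) carry the actual content, and both are settled by comparing projections. For (2), writing the selections as \(f \colon \fset n \to \fset m\) and \(g \colon \fset m \to \fset p\) in \(\F^\op\), one has \((g \circ f)[k] = f[g[k]]\), so both sides are morphisms into \(X_{f[g[1]]} \otimes \cdots \otimes X_{f[g[p]]}\); post-composing the right-hand side with the \(k\)-th projection gives, after two applications of the displayed identity (first with \(g\), then with \(f\) and \(j = g[k]\)), the morphism \(\pi^{f[g[k]]}_{X_1,\ldots,X_n}\), which is exactly the \(k\)-th projection of the left-hand side. For (3), with \(f_1 \colon \fset{n_1} \to \fset{m_1}\) and \(f_2 \colon \fset{n_2} \to \fset{m_2}\) both sides are morphisms into \(X_{f_1[1]} \otimes \cdots \otimes X_{f_1[m_1]} \otimes Y_{f_2[1]} \otimes \cdots \otimes Y_{f_2[m_2]}\); for \(1 \le k \le m_1\) the \(k\)-th projection of this product factors as \(\pi^k_{X_{f_1[1]},\ldots,X_{f_1[m_1]}} \otimes \varepsilon_{Y_{f_2[1]} \otimes \cdots \otimes Y_{f_2[m_2]}}\), and composing it with \(\pi^{(f_1)}_{X_1,\ldots,X_{n_1}} \otimes \pi^{(f_2)}_{Y_1,\ldots,Y_{n_2}}\) yields, by the displayed identity and terminality, \(\pi^{f_1[k]}_{X_1,\ldots,X_{n_1}} \otimes \varepsilon_{Y_1 \otimes \cdots \otimes Y_{n_2}} = \pi^{f_1[k]}_{X_1,\ldots,X_{n_1},Y_1,\ldots,Y_{n_2}}\); since \((f_1 \oplus f_2)[k] = f_1[k]\) this is also the \(k\)-th projection of the left-hand side, and the range \(m_1 < k \le m_1 + m_2\) is symmetric (using \((f_1 \oplus f_2)[k] = f_2[k - m_1] + n_1\)). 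The one genuinely fiddly ingredient I anticipate is precisely this factorization of a projection of a tensor product of products as \(\pi^k \otimes \varepsilon\) or \(\varepsilon \otimes \pi^{k - m_1}\) — everything else is immediate from the universal property — so I would isolate it as a one-line auxiliary observation before treating (3).
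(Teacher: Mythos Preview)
Your argument is correct: the key displayed identity \(\pi^j \circ \pi^{(f)} = \pi^{f[j]}\) together with the universal property of the product reduces every item to a straightforward check, and your handling of the projection factorisation in (3) is sound. The paper itself offers no proof beyond ``verified by direct computation'', so your write-up is a faithful (and more explicit) execution of exactly that.
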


This means that the operations $ \pi $
define a form of action of $\F^\op$: 

\begin{definition}
Let \(f \colon \fset n \to \fset m\)
be a selection. Given morphisms 
\(g_i \colon X_i \to Y_i\)
in $\C$, 
\((1 \le i \le n)\), we define
the \emph{substitution} 
of the \(g_i\)'s in \(f\)
to be the morphism
\[
    f \wr (g_1, \ldots, g_n) 
    \colon
    X_1 \otimes \cdots \otimes X_n
    \to
    Y_{f[1]} \otimes \cdots \otimes
Y_{f[m]}
\] 
given by
\[
    f \wr (g_1, \ldots, g_n) 
    \coloneqq 
    (g_{f[1]} \otimes \cdots \otimes g_{f[m]})
    \circ
    \pi^{(f)}_{X_1, \ldots, X_n}.
\] 
\end{definition}

When applying this to the 
cartesian category \(\C = \F^\op\),
the substitution
\(f \wr (g_1, \ldots, g_n) \)
becomes an internal operation
on the morphisms in $\F^\op$.
Under the composition and this operation,
the set of all morphisms in
\(\F^\op\) is generated 
by the deletion \(\varepsilon _{\fset 1}\),
the duplication \(\Delta_{\fset 1}\)
and the identity
\(\mathrm{id} _{\fset 1}\).

\begin{remark}
The category \(\F^\op\),
    together with the substitution product,
    is a particular example of a \emph{cartesian club},
    as introduced by Kelly in \cite{kellyManyvariableFunctorialCalculus1972,kellyAbstractApproachCoherence1972}.
    In \cite{blackwellTwodimensionalMonadTheory1989}, this club was viewed
as a \(2\)-monad whose algebras are exactly the categories
    with finite products.
\end{remark}

    \subsection{Cartesian operads}
    The following material can
be found for example in
\cite[Section~2.3]{gouldCoherenceCategorifiedOperadic2008},
\cite[Section~2.6]{shulmanCategoricalLogicCategorical2016}
or \cite{shulmanNotesHigherCategories2016}.

\begin{definition}
Let \(\M\) be a multicategory.
   A \emph{cartesian structure}
on $\M$ associates to a selection
    \(f \colon \fset{n} \to \fset{m} \)
    and a choice of objects 
\(A_1,\ldots, A_n, B \in \ob{\M}\)
a map 
     \[
         - \cdot f \colon
         \M(A_{f[1]}, \ldots, A_{f[m]}; B)
         \to \M(A_1, \ldots, A_n; B),
    \] 
    such that the following properties
hold:
    \begin{gather*} 
        (\varphi \cdot g) \cdot f
            =   \varphi \cdot (g \circ f),
            \qquad 
    	\varphi \cdot \mathrm{id}_{\fset n} = \varphi,\\
            (\varphi \cdot f)
            \circ 
            (\psi_1 \cdot g_1, 
            \ldots, 
            \psi_n \cdot g_n)
	= 
            \big[
            \varphi \circ 
            (\psi_{f[1]}, \ldots,
\psi_{f[m]}) 
            \big]            \cdot 
            \big[f \wr (g_1, \ldots, g_n) \big].
    \end{gather*}
\end{definition}
A \emph{cartesian multicategory} is a
multicategory with a chosen cartesian
structure.
\begin{definition}
    A \emph{cartesian operad} is a cartesian multicategory with 
    a single object.
\end{definition}

\begin{figure}[h]
    \centering
    \tikzset{every picture/.style={
    scale=0.3, 
baseline={([yshift=-.5ex]current bounding box.center)},
    every picture/.style={}}}
\begin{tikzpicture}
		\node[label={left:$A_1$}] (0) at (-1, 1) {};
		\node  (1) at (1, 1) {};
		\node[label={left:$A_1$}] (2) at (-1, -1) {};
		\node  (3) at (1, -1) {};
		\node[label={left:$A_2$}] (4) at (-1, -3) {};
		\node  (5) at (1, -3) {};
		\node  (6) at (5, -1) {};
		\node[label={right:$B$}] (7) at (7, -1) {};
		\node  (8) at (2.5, -1) {$\varphi$};
		\node  (9) at (1, 2) {};
		\node  (10) at (1, -4) {};

		\draw  (0.center) to (1.center);
		\draw  (2.center) to (3.center);
		\draw  (4.center) to (5.center);
		\draw  (6.center) to (7.center);
		\draw  (9.center) to (10.center);
		\draw  (10.center) to (6.center);
		\draw  (6.center) to (9.center);
\end{tikzpicture}
\(\quad
\longmapsto
\quad\)
\begin{tikzpicture}
\tikzset{
Black node/.style={
    fill=black,
    shape=circle,
    inner sep=0pt,
    minimum size=6pt
    }
}

		\node[style=Black node] (0) at (-1, 1) {};
		\node (1) at (1, 1) {};
		\node[style=Black node] (2) at (-1, -1) {};
		\node (3) at (1, -1) {};
		\node[style=Black node] (4) at (-1, -3) {};
		\node (5) at (1, -3) {};
		\node (6) at (5, -1) {};
		\node[label={right:$B$}] (7) at (8, -1) {};
		\node (8) at (2.5, -1) {$\varphi$};
		\node (9) at (1, 2) {};
		\node (10) at (1, -4) {};
		\node[style=Black node] (11) at (-3, 0) {};
		\node[style=Black node] (12) at (-3, -2) {};
		\node[label={left:$A_1$}] (13) at (-6, 0) {};
		\node[label={left:$A_2$}] (14) at (-6, -2) {};
		\node (15) at (-4, 3) {};
		\node (16) at (-4, -5) {};
		\node (17) at (2, -5) {};
		\node (18) at (2, 3) {};
		\node (19) at (7, -1) {};
		\node (20) at (-4, 0) {};
		\node (21) at (-4, -2) {};
		\draw (0) to (1.center);
		\draw (2) to (3.center);
		\draw (4) to (5.center);
		\draw (6.center) to (7.center);
		\draw (9.center) to (10.center);
		\draw (10.center) to (6.center);
		\draw (6.center) to (9.center);
		\draw (11) to (0);
		\draw (11) to (2);
		\draw (12) to (4);
		\draw (15.center) to (16.center);
		\draw (16.center) to (17.center);
		\draw (17.center) to (19.center);
		\draw (15.center) to (18.center);
		\draw (18.center) to (19.center);
		\draw (13.center) to (20.center);
		\draw (14.center) to (21.center);
		\draw (20.center) to (11);
		\draw (21.center) to (12);
\end{tikzpicture}
    \caption{\(- \cdot f: \M(A_1,A_1,A_2; B) \to \M(A_1,A_2;B)\)}
\end{figure}

\begin{proposition}
If \(\C\) is a cartesian category
and \(M_0 \subseteq \ob{\C}\),
then the associated multicategory is cartesian.
\end{proposition}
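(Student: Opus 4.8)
The plan is to equip the associated multicategory $\M$ with a cartesian structure built from the canonical morphisms $\pi^{(f)}$, and then to verify the three axioms. Given a selection $f \colon \fset n \to \fset m$, objects $A_1, \ldots, A_n, B \in M_0$, and $\varphi \in \M(A_{f[1]}, \ldots, A_{f[m]}; B) = \C(A_{f[1]} \otimes \cdots \otimes A_{f[m]}, B)$, I set
\[
    \varphi \cdot f \coloneqq \varphi \circ \pi^{(f)}_{A_1, \ldots, A_n}
    \in \C(A_1 \otimes \cdots \otimes A_n, B) = \M(A_1, \ldots, A_n; B).
\]
This is well defined, since $A_1, \ldots, A_n \in M_0 \subseteq \ob{\C}$ and so $\pi^{(f)}_{A_1, \ldots, A_n}$ makes sense; note in particular that the formula only involves objects of $M_0$.

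The first two axioms are immediate from the Proposition stated above about the operations $\pi$. The identity $(\varphi \cdot g) \cdot f = \varphi \cdot (g \circ f)$ unfolds to $\varphi \circ \pi^{(g)}_{\bullet} \circ \pi^{(f)}_{\bullet} = \varphi \circ \pi^{(g \circ f)}_{\bullet}$, which is part (2) of that Proposition, and $\varphi \cdot \id_{\fset n} = \varphi$ is part (1), i.e.\ $\pi^{(\id_{\fset n})}_{A_1, \ldots, A_n} = \id_{A_1 \otimes \cdots \otimes A_n}$.

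The substantive part is the third axiom. Writing $g_i \colon \fset{n_i} \to \fset{m_i}$ and letting $E_i$ denote the tensor product of the inputs of $\psi_i$, I expand the left-hand side using the definition of composition in $\M$ and the bifunctoriality of $\otimes$:
\[
    (\varphi \cdot f) \circ (\psi_1 \cdot g_1, \ldots, \psi_n \cdot g_n)
    = \varphi \circ \pi^{(f)}_{A_1, \ldots, A_n} \circ (\psi_1 \otimes \cdots \otimes \psi_n)
      \circ \bigl( \pi^{(g_1)}_{\bullet} \otimes \cdots \otimes \pi^{(g_n)}_{\bullet} \bigr).
\]
Since the morphisms $\pi^{(f)}$ are natural in each of their arguments (they are assembled from $\varepsilon$, $\Delta$ and identities), I may slide the $\psi_i$ past $\pi^{(f)}_{A_1, \ldots, A_n}$ to rewrite this as
\[
    \varphi \circ (\psi_{f[1]} \otimes \cdots \otimes \psi_{f[m]}) \circ \pi^{(f)}_{E_1, \ldots, E_n}
      \circ \bigl( \pi^{(g_1)}_{\bullet} \otimes \cdots \otimes \pi^{(g_n)}_{\bullet} \bigr),
\]
and, as $\varphi \circ (\psi_{f[1]} \otimes \cdots \otimes \psi_{f[m]})$ is exactly $\varphi \circ (\psi_{f[1]}, \ldots, \psi_{f[m]})$ computed in $\M$, the axiom comes down to the single identity
\[
    \pi^{(f)}_{E_1, \ldots, E_n} \circ \bigl( \pi^{(g_1)}_{\bullet} \otimes \cdots \otimes \pi^{(g_n)}_{\bullet} \bigr)
    = \pi^{(f \wr (g_1, \ldots, g_n))}_{\bullet}
\]
in $\C$, with the tuple of objects given by the concatenation of the input tuples of the $\psi_i \cdot g_i$. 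Using part (3) to rewrite the left factor as $\pi^{(g_1 \oplus \cdots \oplus g_n)}_{\bullet}$, part (2) to fuse the two composites, the definition $f \wr (g_1, \ldots, g_n) = (g_{f[1]} \oplus \cdots \oplus g_{f[m]}) \circ \pi^{(f)}_{\fset{n_1}, \ldots, \fset{n_n}}$ in $\F^\op$, and once more the naturality of $\pi^{(f)}$ (now on the cartesian category $\F^\op$), this reduces to the \emph{regrouping} statement
\[
    \pi^{(f)}_{E_1, \ldots, E_n} = \pi^{(\pi^{(f)}_{\fset{m_1}, \ldots, \fset{m_n}})}_{\bullet},
\]
namely that applying $\pi^{(-)}$ to the block inflation of $f$ by the sizes of the $E_i$ and feeding it the tensor factors of the $E_i$ reproduces $\pi^{(f)}$ acting on the $E_i$ themselves; this is the evident extension of part (6) and is verified by direct computation, e.g.\ by induction on $n$ or straight from the description of $\pi^{(f)}$ in terms of $\varepsilon$ and $\Delta$.

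I expect the only real obstacle to be the index bookkeeping in this last step: the selection $f$ reindexes the family $(g_i)$ and hence the decomposition $\fset{n_1} \oplus \cdots \oplus \fset{n_n}$ of the source, so one must track carefully which object sits in which tensor factor on each side of the displayed identities. Everything else is a formal consequence of the bifunctoriality of $\otimes$, the naturality of $\pi$ in its object variables, and the properties of $\pi$ in the selection variable recorded above. (Alternatively, one can first handle $M_0 = \ob{\C}$, so that $\M$ is the underlying multicategory of $\C$, and then note that the general case follows at once, because the operations $- \cdot f$ involve only the objects $A_1, \ldots, A_n, B$ and therefore restrict to the full sub-multicategory on any subcollection $M_0 \subseteq \ob{\C}$; the content is the same either way.)
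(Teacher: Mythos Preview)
Your proof is correct and follows exactly the same approach as the paper: define $\varphi \cdot f \coloneqq \varphi \circ \pi^{(f)}_{A_1,\ldots,A_n}$ and verify the axioms via the listed properties of $\pi^{(-)}$. The paper's own proof simply asserts that this precomposition ``has the desired properties'' without further detail, so your write-up is a considerably more thorough unpacking of the same idea; the only slip is calling the tensor $\pi^{(g_1)}\otimes\cdots\otimes\pi^{(g_n)}$ the ``left factor'' when it is the rightmost one in your display.
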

\begin{proof}
    For any objects \(A_1, \ldots, A_n, B\) in  \(\C\)
    and selection 
    \(f \colon \fset{n} \to \fset{m} \),
    precomposing with the morphism 
    \(\pi^{(f)}_{A_1,\ldots,A_n}\) 
    induces a map
    \[
        - \cdot f
        \colon
        \C(A_{f[1]} \otimes \cdots \otimes A_{f[m]}, B)
        \to
        \C(A_1 \otimes \cdots \otimes A_n, B)
    \] 
with the desired properties. 
\end{proof}

The converse does not hold in general;
however, we have:

\begin{proposition}
    A monoidal category \(\C\) 
    is cartesian
    if and only if
    its underlying multicategory
    is cartesian.
\end{proposition}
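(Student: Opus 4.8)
The implication ``$\C$ cartesian $\Rightarrow$ underlying multicategory cartesian'' is the previous proposition with $M_0=\ob{\C}$, so the real content is the converse. Assume the underlying multicategory $\M$ of $\C$ carries a cartesian structure $(-\cdot f)$; the plan is to reconstruct, directly out of the maps $-\cdot f$, the data required by Theorem~\ref{thm:cartmon_nattrans} — a uniform deletion $\varepsilon$ and a counital natural copying $\Delta$ satisfying $(\ref{eq:deltaeps_id})$ — using throughout the identification $\M(A_1,\dots,A_n;B)=\C(A_1\otimes\cdots\otimes A_n,B)$.

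First I would repackage the cartesian structure into morphisms of $\C$: for a selection $f\colon\fset n\to\fset m$ and objects $A_1,\dots,A_n$, set $\pi^{(f)}_{(A_1,\dots,A_n)}\coloneqq \id_{A_{f[1]}\otimes\cdots\otimes A_{f[m]}}\cdot f\colon A_1\otimes\cdots\otimes A_n\to A_{f[1]}\otimes\cdots\otimes A_{f[m]}$, and in particular $\varepsilon_X\coloneqq\pi^{(\varepsilon_{\fset 1})}_{(X)}\colon X\to\one$ and $\Delta_X\coloneqq\pi^{(\Delta_{\fset 1})}_{(X)}\colon X\to X\otimes X$. From the three cartesian-multicategory axioms I would extract the following calculus, which is the analogue — now proved from the axioms alone — of the properties of the $\pi$ operations listed earlier for a genuinely cartesian base: (a) $\varphi\cdot f=\varphi\circ\pi^{(f)}$ for every $\varphi$ (feed $\varphi$, read as a unary morphism, and $\pi^{(f)}$ into the interchange axiom along $\id_{\fset 1}$, using $\id_{\fset 1}\wr f=f$); (b) $\pi^{(\id_{\fset n})}=\id$ (from $\varphi\cdot\id_{\fset n}=\varphi$); (c) functoriality $\pi^{(g\circ f)}_{\vec A}=\pi^{(g)}_{\vec A_f}\circ\pi^{(f)}_{\vec A}$ with $\vec A_f=(A_{f[1]},\dots,A_{f[m]})$ (from $(\varphi\cdot g)\cdot f=\varphi\cdot(g\circ f)$ and (a)); and (d) monoidality $\pi^{(f_1\oplus f_2)}_{(\vec A,\vec B)}=\pi^{(f_1)}_{\vec A}\otimes\pi^{(f_2)}_{\vec B}$ (the interchange axiom with $f=\id_{\fset 2}$ and $g_i=f_i$).

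Granting this calculus, the verification of the hypotheses of Theorem~\ref{thm:cartmon_nattrans} is essentially bookkeeping with selections in $\F^\op$. Naturality of $\varepsilon$ and $\Delta$ comes again from the interchange axiom: precomposing the defining expression with an arbitrary $h$ and simplifying $\wr$ yields $\varepsilon_Y\circ h=\varepsilon_X$ and $\Delta_Y\circ h=(h\otimes h)\circ\Delta_X$. Since $\pi^1_{\fset 1,\fset 1}=\id_{\fset 1}\oplus\varepsilon_{\fset 1}$ and $\pi^2_{\fset 1,\fset 1}=\varepsilon_{\fset 1}\oplus\id_{\fset 1}$, properties (b) and (d) give $\pi^{(\pi^1_{\fset 1,\fset 1})}_{(X,X)}=\id_X\otimes\varepsilon_X$ and $\pi^{(\pi^2_{\fset 1,\fset 1})}_{(X,X)}=\varepsilon_X\otimes\id_X$, so counitality of $\Delta$ follows from (c), (b) and the identities $\pi^1_{\fset 1,\fset 1}\circ\Delta_{\fset 1}=\pi^2_{\fset 1,\fset 1}\circ\Delta_{\fset 1}=\id_{\fset 1}$ in $\F^\op$. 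Then $\varepsilon_\one=\id_\one$ drops out of counitality at $\one$ (using $\id_\one\otimes\varepsilon_\one=\varepsilon_\one$ by strictness) together with naturality of $\varepsilon$ applied to the endomorphism $\Delta_\one$ of $\one$; by Proposition~\ref{udine}, $\C$ is then semi-cartesian with deletion $\varepsilon$, whence $\pi^1_{XY}=\id_X\otimes\varepsilon_Y$ and $\pi^2_{XY}=\varepsilon_X\otimes\id_Y$. Finally $(\ref{eq:deltaeps_id})$ follows from $\pi^1_{XY}\otimes\pi^2_{XY}=\pi^{(\pi^1_{\fset 1,\fset 1}\oplus\pi^2_{\fset 1,\fset 1})}_{(X,Y,X,Y)}$ (by (d)), the identity $\Delta_{X\otimes Y}=\pi^{(\Delta_{\fset 2})}_{(X,Y)}$, the equality $(\pi^1_{\fset 1,\fset 1}\oplus\pi^2_{\fset 1,\fset 1})\circ\Delta_{\fset 2}=\id_{\fset 2}$ in $\F^\op$, and (c), (b).

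The step I expect to be the real obstacle is establishing that the abstract cartesian structure on $\M$ is compatible with the object-level tensor product of $\C$ — concretely, properties (a) and (d) and the identity $\Delta_{X\otimes Y}=\pi^{(\Delta_{\fset 2})}_{(X,Y)}$. The delicate point is that a composite object $X\otimes Y$ and the length-two list $(X,Y)$ have literally equal hom-sets in $\M$ but feed into the operations $-\cdot f$ with different domain arities, so their coherence has to be coaxed out of the interchange axiom rather than assumed; the last identity, for instance, I would obtain by substituting the universal multimorphism $\mu_{X,Y}=\id_{X\otimes Y}\in\M(X,Y;X\otimes Y)$ into both inputs of $\id_{(X\otimes Y)\otimes(X\otimes Y)}$ and simplifying the resulting $f\wr(\cdots)$ to $\Delta_{\fset 2}$. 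Once these compatibilities are in place, the rest is routine manipulation of selections.
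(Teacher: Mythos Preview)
Your approach is correct and is essentially the same as the paper's: define $\varepsilon_X=\id_\one\cdot\varepsilon_{\fset 1}$ and $\Delta_X=\id_{X\otimes X}\cdot\Delta_{\fset 1}$ from the cartesian multicategory structure and invoke Theorem~\ref{thm:cartmon_nattrans}. The paper's proof stops at this definition and leaves all verification implicit, so your detailed treatment --- in particular your correct identification and resolution of the arity-compatibility issue (that $X\otimes Y$ as a single object and $(X,Y)$ as a length-two list give literally the same hom-sets but feed into $-\cdot f$ at different arities) --- fills in genuine content that the paper does not spell out.
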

\begin{proof}
    One implication follows from the previous proposition.
    For the reverse implication, let us assume that the
    underlying multicategory is cartesian.
    Then, we define the uniform deletion \(\varepsilon\)
    and the uniform copying \(\Delta\) by
    setting
    \( \varepsilon_X
    \coloneqq 
    \id_\one
    \cdot
    \varepsilon_{\fset 1}
    \)
    and
    \(\Delta_X \coloneqq 
    \id_{X \otimes X} 
    \cdot 
    \Delta_{\fset 1}\)
    for each \(X \in \ob{\C}\).
\end{proof}

\begin{corollary}
    The endomorphism operad of
an object in a 
    cartesian category
is cartesian.
\end{corollary}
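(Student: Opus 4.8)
The plan is to specialise the immediately preceding proposition --- ``if $\C$ is a cartesian category and $M_0 \subseteq \ob{\C}$, then the associated multicategory is cartesian'' --- to the single-object case. First I would recall that the endomorphism operad of an object $X \in \ob{\C}$ is, by definition, the multicategory $\M$ associated to the collection $M_0 = \{X\}$; since $M_0$ is a singleton, $\M$ has exactly one object, so it is an operad in the sense defined above.

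Next, assuming $\C$ is cartesian, I would apply the proposition with this choice of $M_0$. It yields that $\M$ is a cartesian multicategory: for every selection $f \colon \fset{n} \to \fset{m}$, precomposition with $\pi^{(f)}_{X, \ldots, X} \colon X^{\otimes n} \to X^{\otimes m}$ furnishes the required map
\[
    - \cdot f \colon \M(\underbrace{X, \ldots, X}_{m}; X) \longrightarrow \M(\underbrace{X, \ldots, X}_{n}; X),
\]
and the cartesian operad axioms hold by the proposition. The only thing to observe here is that all the objects $A_1, \ldots, A_n, B$ appearing in the statement of the proposition collapse to $X$, so in particular $A_{f[i]} = X$ for every $i$; this is automatic.

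Finally, since a cartesian operad is by definition a cartesian multicategory with a single object, the two previous points together say exactly that the endomorphism operad of $X$ is a cartesian operad, which is the claim. I do not expect any real obstacle: the entire content is already contained in the preceding proposition, and this corollary is merely its instantiation at $M_0 = \{X\}$.
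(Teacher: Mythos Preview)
Your proposal is correct and follows exactly the approach intended by the paper: the corollary is stated without proof because it is the immediate specialisation of the preceding proposition to $M_0 = \{X\}$, which is precisely what you do.
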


\subsection{Clones}
Clones are usually defined as follows:

\begin{definition}\label{defclone}
    An \emph{(abstract) clone} is a collection
    of sets \(\{ C_{n} \}_{n \in \N}\), 
    together with elements
    \[
    \pi_{i,n} \in C_{n}, \quad i=1,\ldots,n
    \] 
    and maps
    \[
        \bullet: C_{m} \times (C_{n})^{m} \to C_{n}
    \] 
    satisfying for all \(\varphi \in C_{m}\),
    \(\psi_{1}, \ldots, \psi_{m} \in C_{n}\),
    \(\rho_{1}, \ldots, \rho_{n} \in C_{l}\)
    \begin{enumerate}
        \item \(\varphi \bullet (\pi_{1,n}, \ldots, \pi_{n,n}) = \varphi\),
        \item \(\pi_{i,m} \bullet (\psi_{1}, \ldots, \psi_{m}) = \psi_{i}\), and
        \item \(\varphi \bullet (
        \psi_{1} \bullet (\rho_{1}, \ldots, \rho_{n}), 
        \ldots,
        \psi_{m} \bullet (\rho_{1}, \ldots, \rho_{n})) \\
        = ( \varphi \bullet (\psi_{1}, \ldots, \psi_{m}))
        \bullet (\rho_{1}, \ldots, \rho_{n})\)
    \end{enumerate}
\end{definition}
As is pointed out e.g.~in
\cite{gouldCoherenceCategorifiedOperadic2008,hylandNotionLambdaMonoid2014}, 
this concept is just an equivalent way to 
view cartesian operads:

\begin{theorem}\label{important2}
Let $\OO$ be a cartesian operad with
object $X$.
For any
            \(m,n \in \N,1 \le i \le n\),
            \(\varphi \in \OO_{m}\),
            \(\psi_{1}, \ldots, \psi_{m}
\in \OO_{n}\), define
            \[
            \pi_{i,n} \coloneqq 
	    \mathrm{id}_X 
            \cdot 
            \pi^i_{\fset 1, \ldots, \fset 1} 
            ,\quad
            \varphi \bullet 
            (\psi_{1}, \ldots, \psi_{m}) 
            \coloneqq 
	    \big(\varphi \circ 
                (\psi_{1}, \ldots,
                \psi_{m})
            \big)            \cdot
            \Delta^{m-1}_{\fset n},
            \] 
            where the map 
            \(\Delta^{m-1}_{\fset n} 
            \colon
            \fset n
            \to
            \fset{m n} = 
	    \fset n \oplus \cdots
	    \oplus \fset n \)
            is the uniform copying in 
$\F^\op$ applied \(m-1\) times.  
    With these operations, 
$\{\OO_n\}_{n \in \mathbb{N} }$ 
becomes an abstract clone, and this
defines an isomorphism between the
categories of cartesian operads
    and of abstract clones.
\end{theorem}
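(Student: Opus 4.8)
The plan is to construct a functor $\Phi$ from cartesian operads to abstract clones via the displayed formulas, a functor $\Psi$ in the opposite direction, and to check that $\Phi$ and $\Psi$ are mutually inverse and respect morphisms. On underlying data both functors will be the identity, sending $\OO_n$ to $C_n$ and conversely, so the content lies entirely in matching up the structure maps. The three recurring tools are: the cartesian-multicategory axioms for $-\cdot f$ and $\circ$; the clone axioms (1)--(3); and the explicit combinatorics of $\F^\op$ --- in particular that every morphism of $\F^\op$ is built from $\varepsilon_{\fset 1}$, $\Delta_{\fset 1}$ and $\id_{\fset 1}$ under $\circ$ and the substitution product, together with the concrete descriptions of $\Delta^{m-1}_{\fset n}$ and of the projections $\pi^{i}_{\fset 1,\ldots,\fset 1}$.

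For $\Phi$, I would verify the three clone axioms one at a time. In each case the strategy is the same: expand $\varphi\bullet(\ldots)$, use the mixed cartesian axiom $(\varphi\cdot f)\circ(\psi_{1}\cdot g_{1},\ldots,\psi_{n}\cdot g_{n}) = [\varphi\circ(\psi_{f[1]},\ldots,\psi_{f[m]})]\cdot[f\wr(g_{1},\ldots,g_{n})]$ together with the multicategory unit law to push everything into a single application of $-\cdot$, then use $(\varphi\cdot g)\cdot f=\varphi\cdot(g\circ f)$ and $\varphi\cdot\id_{\fset n}=\varphi$ and reduce the remaining selection to $\id_{\fset n}$ inside $\F^\op$. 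For instance, axiom (1) reduces to the identity $(\pi^{1}_{\fset 1,\ldots,\fset 1}\otimes\cdots\otimes\pi^{n}_{\fset 1,\ldots,\fset 1})\circ\Delta^{n-1}_{\fset n}=\id_{\fset n}$ in $\F^\op$, axiom (2) to $\pi^{i}_{\fset n,\ldots,\fset n}\circ\Delta^{m-1}_{\fset n}=\id_{\fset n}$, and axiom (3) to an analogous but longer identity among composites of the structure maps of $\F^\op$; all of these hold because $\F^\op$ is cartesian. Functoriality of $\Phi$ is then immediate: a morphism of cartesian operads preserves $\circ$, $-\cdot$ and the distinguished morphisms of $\F^\op$, hence sends $\pi_{i,n}$ and $\bullet$ to their counterparts, i.e.\ is a clone morphism.

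For $\Psi$, start from a clone $\{C_{n}\}$ and put $\OO_{n}:=C_{n}$, $\id_{X}:=\pi_{1,1}$. To a selection $f\colon\fset n\to\fset m$ with underlying map $\hat f\colon\fset m\to\fset n$ associate $\varphi\cdot f:=\varphi\bullet(\pi_{\hat f(1),n},\ldots,\pi_{\hat f(m),n})$; then $\varphi\cdot\id_{\fset n}=\varphi$ is axiom (1), and $(\varphi\cdot g)\cdot f=\varphi\cdot(g\circ f)$ follows from axioms (2) and (3) (recalling that composition in $\F^\op$ is composition of the underlying maps in the reverse order). For the full composition, \emph{pad} each inner operation to the total arity $K=k_{1}+\cdots+k_{n}$: with $K_{i-1}=k_{1}+\cdots+k_{i-1}$ set $\psi_{i}^{+}:=\psi_{i}\bullet(\pi_{K_{i-1}+1,K},\ldots,\pi_{K_{i-1}+k_{i},K})\in C_{K}$ and $\varphi\circ(\psi_{1},\ldots,\psi_{n}):=\varphi\bullet(\psi_{1}^{+},\ldots,\psi_{n}^{+})$. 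The multicategory unit and associativity axioms and the mixed cartesian axiom then all follow from axioms (1)--(3) together with the explicit form of $f\wr(-)$ and of $\Delta^{m-1}_{\fset n}$ in $\F^\op$; functoriality of $\Psi$ is again immediate.

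Finally I would check that $\Phi$ and $\Psi$ are mutually inverse. Since both are the identity on underlying sets, it suffices to see that the structure maps round-trip. For $\Psi\circ\Phi$, the reconstructed action satisfies $\varphi\cdot f=\varphi\bullet(\pi_{\hat f(1),n},\ldots,\pi_{\hat f(m),n}) = \varphi\cdot\bigl[(\pi^{\hat f(1)}_{\fset 1,\ldots,\fset 1}\otimes\cdots\otimes\pi^{\hat f(m)}_{\fset 1,\ldots,\fset 1})\circ\Delta^{m-1}_{\fset n}\bigr]$, and the bracketed selection is exactly $f$ because every morphism of $\F^\op$ factors in this way; a similar computation collapses the reconstructed $\circ$ to the original one, and $\id_{X}=\pi_{1,1}=\id_{X}\cdot\id_{\fset 1}$ is unchanged. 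For $\Phi\circ\Psi$, one gets $\pi_{i,n}=\pi_{1,1}\bullet(\pi_{i,n})=\pi_{i,n}$ by axiom (2), and unfolding the reconstructed $\bullet$ in terms of the padded composition and the action, then repeatedly applying axiom (3) to distribute and axioms (1)--(2) to collapse the resulting projections, returns $\bullet$. I expect the main obstacle to be purely bookkeeping: none of the steps is deep, but tracking which block of $\fset K$ a given variable lies in, and matching the explicit forms of $\Delta^{m-1}_{\fset n}$ and of $f\wr(-)$ against nested applications of $\bullet$, is delicate --- especially in the mixed cartesian axiom for $\Psi$ --- so I would first isolate the behaviour of the padding operations $\psi\mapsto\psi^{+}$ and of $-\cdot f$ under $\circ$ as auxiliary lemmas before verifying that axiom.
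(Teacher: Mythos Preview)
Your proposal is correct and follows essentially the same approach as the paper. In particular, your ``padded'' operations $\psi_i^{+}=\psi_i\bullet(\pi_{K_{i-1}+1,K},\ldots,\pi_{K_{i-1}+k_i,K})$ are exactly the paper's $\psi_i\cdot\pi^{i}_{\fset{m_1},\ldots,\fset{m_n}}$ once the action $-\cdot f$ is unfolded, so your inverse functor $\Psi$ coincides with the one the paper writes down; the paper simply defers the remaining verifications to the references, whereas you sketch them out.
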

\begin{proof}
The inverse of the isomorphism is as
follows: let \(C\) be an abstract clone.
For each \(n \in \N\), let \(\OO_n \coloneqq C_n\),
for any selection \(f \colon \fset{n} \to \fset{m}\),
             let \(- \cdot f 
             \colon
             \OO_{m} \to \OO_{n}\) with
             \[
              \varphi \cdot f 
              \coloneqq 
             \varphi \bullet
(\pi_{f[1],n}, \ldots, \pi_{f[m],n}),
             \] 
and for \(\varphi \in \OO_{n}\)
             and 
             \(\psi_{i} \in \OO_{m_i}\)
             \((i=1, \ldots, n)\), let
              \[
                  \varphi \circ (\psi_{1}, \ldots, \psi_{n})
                  \coloneqq 
              \varphi \bullet (
              \psi_{1}
              \cdot 
              \pi^1_{\fset m_1, \ldots ,\fset m_n},
              \ldots,
              \psi_{n}
              \cdot 
              \pi^n_{\fset m_1, \ldots ,\fset m_n} 
              ),
             \] 
             where the map 
             \(\pi^i_{\fset m_1, \ldots \fset m_n}
             \colon
	    \fset{m_1} \oplus \ldots \oplus 
	    \fset{m_n}
             \to  \fset{m_i}
             \)
             is the canonical projection in \(\F^\op\).
             We refer the reader to
\cite{gouldCoherenceCategorifiedOperadic2008,
hylandNotionLambdaMonoid2014}
 for the details of the proof.
\end{proof}

\begin{figure}
    \captionsetup[subfigure]{labelformat=empty}
    \tikzset{
        every picture/.style={
            scale=0.4, 
every picture/.style={}}}
    \begin{subfigure}[b]{0.3\textwidth}
        \centering
        \begin{tikzpicture}
\tikzset{
Black node/.style={
    fill=black,
    shape=circle,
    inner sep=0pt,
    minimum size=6pt
    }
}

		\node[style=Black node] (0) at (-2, 1) {};
		\node[style=Black node] (1) at (-2, 0) {};
		\node[style=Black node] (2) at (-2, -1) {};
		\node[style=Black node] (3) at (0, 0) {};
		\node (4) at (-4, 1) {};
		\node (5) at (-4, 0) {};
		\node (6) at (-4, -1) {};
		\node (7) at (1.5, 0) {};
		\node (8) at (-3, 2) {};
		\node (9) at (-3, -2) {};
		\node (10) at (0, -2) {};
		\node (11) at (0, 2) {};
		\node (12) at (3, 0) {};
		\draw (4.center) to (0);
		\draw (5.center) to (1);
		\draw (6.center) to (2);
		\draw (2) to (3);
		\draw (3) to (12.center);
		\draw (8.center) to (9.center);
		\draw (9.center) to (10.center);
		\draw (10.center) to (7.center);
		\draw (7.center) to (11.center);
		\draw (11.center) to (8.center);

\end{tikzpicture}
    
        \caption{\(
            \pi_{3,3} 
\) }
    \end{subfigure}
    \hfill
    \begin{subfigure}[b]{0.5\textwidth}
        \centering
        \begin{tikzpicture}
\tikzset{
Black node/.style={
    fill=black,
    shape=circle,
    inner sep=0pt,
    minimum size=6pt
    }
}

		\node[style=Black node] (0) at (-1, 1) {};
		\node[style=Black node] (1) at (-1, 0) {};
		\node[style=Black node] (2) at (-1, -1) {};
		\node[style=Black node] (3) at (2, 3) {};
		\node[style=Black node] (4) at (2, 2) {};
		\node[style=Black node] (5) at (2, 1) {};
		\node[style=Black node] (6) at (2, -1) {};
		\node[style=Black node] (7) at (2, -2) {};
		\node[style=Black node] (8) at (2, -3) {};
		\node (9) at (3, 3) {};
		\node (10) at (3, 2) {};
		\node (11) at (3, 1) {};
		\node (12) at (3, 3.75) {};
		\node (13) at (3, 0.25) {};
		\node (14) at (5, 2) {};
		\node (15) at (3, -1) {};
		\node (16) at (3, -2) {};
		\node (17) at (3, -3) {};
		\node (18) at (3, -0.25) {};
		\node (19) at (3, -3.75) {};
		\node (20) at (5, -2) {};
		\node (21) at (7, 1) {};
		\node (22) at (7, -1) {};
		\node (23) at (7, 1.75) {};
		\node (24) at (7, -1.75) {};
		\node (25) at (9, 0) {};
		\node (26) at (-3, 1) {};
		\node (27) at (-3, 0) {};
		\node (28) at (-3, -1) {};
		\node (29) at (12, 0) {};
		\node (30) at (-2, 4.5) {};
		\node (31) at (-2, -4.5) {};
		\node (32) at (7, -4.5) {};
		\node (33) at (7, 4.5) {};
		\node (34) at (11, 0) {};
		\node (35) at (3.75, 2) {$\psi_1$};
		\node (36) at (3.75, -2) {$\psi_2$};
		\node (37) at (7.75, 0) {$\varphi$};
		\draw (0) to (3);
		\draw (1) to (4);
		\draw (2) to (5);
		\draw (0) to (6);
		\draw (1) to (7);
		\draw (2) to (8);
		\draw (3) to (9.center);
		\draw (4) to (10.center);
		\draw (5) to (11.center);
		\draw (12.center) to (13.center);
		\draw (13.center) to (14.center);
		\draw (12.center) to (14.center);
		\draw (18.center) to (19.center);
		\draw (19.center) to (20.center);
		\draw (18.center) to (20.center);
		\draw (6) to (15.center);
		\draw (7) to (16.center);
		\draw (8) to (17.center);
		\draw (23.center) to (24.center);
		\draw (24.center) to (25.center);
		\draw (25.center) to (23.center);
		\draw[in=-180, out=0, looseness=1.25] (14.center) to (21.center);
		\draw[in=-180, out=0, looseness=1.25] (20.center) to (22.center);
		\draw (26.center) to (0);
		\draw (27.center) to (1);
		\draw (28.center) to (2);
		\draw (30.center) to (31.center);
		\draw (31.center) to (32.center);
		\draw (32.center) to (34.center);
		\draw (30.center) to (33.center);
		\draw (33.center) to (34.center);
		\draw (25.center) to (29.center);

\end{tikzpicture}
    
        \caption{\(
            \varphi 
            \bullet 
            (\psi_1,\psi_2)
        \) }
    \end{subfigure}
    \caption{From a cartesian operad into a clone}
\end{figure}

\begin{remark}
\emph{Lawvere theories} are yet another
perspective on the same concepts. The
equivalence is most directly seen when
defining them as identity-on-objects functors
on $\F^\op$ that preserve products. We
refer e.g.~to
\cite{gouldCoherenceCategorifiedOperadic2008,
hylandNotionLambdaMonoid2014,lawvereFunctorialSemanticsAlgebraic2004} for
further information.  
\end{remark}

We will now turn to 
our main aim to construct an 
object $X$ in a monoidal
category $\C$ for which 
$\{X^{\otimes n}\} \subseteq
\C$ is not cartesian, but whose
endomorphism operad is
cartesian.  
Our last theoretical result is 
that for Hopf monoids
\cite[Definition 6.31]{heunenCategoriesQuantumTheory2019},
this 
phenomenon cannot occur: 

\begin{proposition}\label{hopf}
Let $H$ be a Hopf monoid in a
braided monoidal category
$\C$. Then the maps {\rm
(\ref{clonestr1})}
and {\rm (\ref{clonestr2})}
given in the introduction turn 
the endomorphism
operad of $H$ in $\ComC$ into a clone  
if and only if $H$ is
cocommutative.
\end{proposition}
\begin{proof}
The implication 
``$ \Leftarrow$'' follows from
Theorem~\ref{notsogood},
so we prove ``$\Rightarrow$''. 
A Hopf monoid is in
particular a bimonoid, that
is, a monoid in the monoidal
category $\ComC$. Thus $H$ comes
equipped with a multiplication
$ \mu \colon H \otimes H
\rightarrow H$ which is a
morphism of comonoids.
Using the clone product, this
means that  
$ \mu \bullet ( \mathrm{id}
_H, \mathrm{id} _H) = \mu \circ \Delta 
\colon 
H \rightarrow H$ is a morphism
of comonoids, too. 

Explicitly,
this means that 
$$
 	\quad ( \mu \otimes \mu ) \circ 
	( \Delta \otimes \Delta ) \circ 
	\Delta 
=  \Delta \circ \mu \circ
\Delta .
$$
Inserting the fact that 
$ \mu $ is a comonoid morphism
yields 
\begin{equation}
	\label{prague}
  ( \mu \otimes \mu ) \circ 
	( \Delta \otimes \Delta ) \circ 
	\Delta 
=  
	(\mu \otimes \mu ) \circ 
	( \mathrm{id} _H \otimes 
	\sigma _{H,H} \otimes 
	\mathrm{id} _H) \circ
	( \Delta \otimes \Delta )\circ
\Delta .
\end{equation}
To deduce from this the
cocommutativity of $ \Delta $,
we recall that 
the bimonoid structure on 
$H$ defines a second monoid  
structure on the set $\C(H,H)$
(besides the one given by
composition), whose product is 
the convolution
$$
	\varphi \conv 
	\psi  := 
	\mu \circ ( \varphi \otimes 
	\psi ) \circ \Delta
$$
and whose unit element is 
$\eta \circ
\varepsilon $ (the composition
of counit and unit morphism).
The sets 
$\C(H^{\otimes m},H^{\otimes
n})$ carry commuting left and
right actions of the 
monoid $(\C(H,H),\conv,
\eta \circ \varepsilon )$
given by  
\begin{align*}
	\varphi \lact \alpha 
	&:= 
	(\mu \otimes 
	\mathrm{id}_{H^{\otimes n-1}} ) 
	\circ 
	(\varphi \otimes \alpha) 
	\circ (\Delta \otimes 
	\mathrm{id} _{H^{\otimes
m-1}}), \\
	\alpha \ract \psi 
	&:= 
	( 
	\mathrm{id}_{H^{\otimes n-1}} 
	\otimes \mu ) 
	\circ 
	(\alpha \otimes \psi ) 
	\circ (
	\mathrm{id} _{H^{\otimes
m-1}} \otimes \Delta).
\end{align*}
With this notation, 
(\ref{prague}) can be
rewritten as 
$$
	\mathrm{id} _H \lact 
	\Delta \ract \mathrm{id}
_H= \mathrm{id} _H \lact 
	(\sigma _{H,H} \circ \Delta
) \ract \mathrm{id} _H. 
$$
Finally, a Hopf
monoid is by definition a bimonoid for
which $ \mathrm{id} _H \in 
\C(H,H)$ is
invertible with respect to
$\conv$, so the above implies 
\[
	\Delta = \sigma _{H,H} 
	\circ \Delta . \qedhere
\]
\end{proof}

\section{An example}\label{examplessec}
Let 
\(\Ab\) be the symmetric monoidal 
category of abelian groups with $ \otimes
_\mathbb{Z}$ as monoidal structure.
The category of comonoids in the opposite category 
$\Ab^\op$ is $\NCS$, the
opposite
of the category of
unital associative rings. 
Our main aim is to provide an
explicit example of a ring which viewed as a
comonoid in $\Ab^\op$ does not generate a
cartesian category, yet its 
endomorphism
operad is a clone. 

\subsection{Motivation}\label{motivsec}
In
this Section~\ref{motivsec} we
indicate why and how we were
searching for such a
ring. Readers who are 
just interested in the example
itself can safely skip this
material. 

Recall that in algebraic geometry, the 
full subcategory 
$$ 
	\Aff := \cCom(\Ab^\op)
	\subset \NCS
$$ 
of commutative
rings gets interpreted as the
cartesian category of affine
schemes, by
identifying a
commutative ring $A$ 
with the locally ringed space
$X=\mathrm{Spec}(A)$.
A morphism of commutative
rings ${A \rightarrow A^{\otimes n}}$ 
is then the same as 
a morphism of affine
schemes ${X \times \cdots \times X
\rightarrow X}$. 

What is special about 
$\Ab^\op$ is that it is
monoidal abelian. This allows 
us to consider deformations of 
cocommutative comonoids which
are not necessarily
cocommutative,
but are in a sense not
far from being so:

\begin{definition}\label{defdeformation}
An \emph{infinitesimal
deformation of order $n$} of a ring $B$ is
a surjective ring morphism 
$ A \rightarrow B$ whose
kernel $I \lhd A$ is an 
ideal for which $I^n=0$, that
is, for which $a_1 \dots
a_n=0$ for all $ a_1,\dots,a_n
\in I$.   
\end{definition}

An infinitesimal
deformation of order 2 is also
known as an \emph{abelian} (or
\emph{square zero}) 
\emph{extension}, while the
process of completion (taking
a limit $n \to \infty$) yields 
\emph{formal deformations}. 
For further
background reading, we refer 
the interested
reader to the literature:   
\cite[Section~12.2]{lodayAlgebraicOperads2012}
discusses the abstract theory of
deformations of algebras over
algebraic operads, 
\cite[Sections 1.5.3 and
1.5.4]{lodayCyclicHomology1998} or
\cite[Section~9.3]{weibelIntroductionHomologicalAlgebra1994}
contain
more information on the
specific example of
associative 
rings and algebras, 
\cite[Appendix~E]{lodayCyclicHomology1998} 
discusses the example of
commutative rings and
algebras, 
and 
\cite[Section
II.9]{hartshorneAlgebraicGeometry1977} provides 
classical background in
algebraic geometry.

We felt it would be natural
to test whether the
endomorphism operad of such
comonoids that are close to
being cocommutative 
have a bigger chance of being 
cartesian. The first examples of
rings that we considered
failed, however, see  
Remark~\ref{nonexample} below
for an explicit one. 
Proposition~\ref{hopf} gave a
conceptual explanation for this.   

As the computation of the
endomorphism operad of a given
ring is rather involved, our 
focus was then on finding rings
for which every ring
morphism $A \rightarrow A^{
\otimes n}$ is of the form 
$ a \mapsto 1 \otimes \cdots 
\otimes 1 \otimes \sigma (a) 
\otimes 1 \otimes \cdots
\otimes 1$ for a ring
endomorphism $ \sigma $, that
is, a ring whose endomorphism operad
is a clone and is as such generated by 
the endomorphisms of $A$.

We also failed to 
find noncommutative rings with this
property, but the attempt to
construct such rings in terms of
generators and relations has
led us to the example for the
original question described
here. 

\subsection{A deformation of $ \mathbb{Z}
[\sqrt{q}]$}
Let $q \in \mathbb{N} $ be a
natural number that we assume
is not a square, and let 
$A$ be the universal
ring with generators $t,x$ 
satisfying
$$
	t^2=q,\quad 
	tx=-xt,\quad 
	x^2=0.
$$
So $A$ is noncommutative
(since $tx = -xt$), and the
ring morphism 
$$
	A \rightarrow \mathbb{Z}
[\sqrt{q}] \cong A/I,\quad 
	I:=Ax=xA
$$
turns $A$ into a
deformation of order 2 of 
$ \mathbb{Z} [\sqrt{q}] $ as
in
Definition~\ref{defdeformation}. 

Let $\OO_n$ be the set of all 
ring morphisms $A \rightarrow A^{\otimes
n}$ and $ \pi _{i,n} \in
\OO_n$ be given by 
\[
	A \rightarrow 
	A^{\otimes n},\quad
	a \mapsto 
	1 \otimes 
	\cdots \otimes  1
	\otimes  a \otimes
	1 \otimes  \cdots
	\otimes  1. 
\]  
Then the ring $A^{\otimes n}$ is generated
by the elements
$$
	t_i:= \pi _{i,n}(t),\quad
	x_i:= \pi _{i,n}(x),
$$
and these generators satisfy 
$$
	t_it_j=t_jt_i,\quad
	t_i^2=q,\quad
	x_ix_j=x_jx_i,\quad
	x_i^2=0,
$$
$$
	t_ix_j=x_jt_i,\quad i \neq
	j,\quad
	t_ix_i = -x_it_i. 
$$
Finally, let
\(
	\varphi \bullet 
	(\psi _1,\ldots,\psi_m)\) 
be given for \( \varphi \in \OO_m,
\psi _j \in \OO_n\) by 
\begin{equation}
	\label{eqn:defnbullet}
	\mu^{m-1} _{A^{\otimes n}} \circ 
	(\psi _1 \otimes \cdots \otimes 
	\psi _m) \circ \varphi.
\end{equation}

The main result of this section is:

\begin{theorem}
	\label{thm:EndAisaclone}
The elements of $\OO_n$  
are given by 
$$
	t \mapsto 
	\pm t_d + f x_d,
	\quad
	x \mapsto 
	g x_d,
	\quad
	tx \mapsto 
	\pm g t_dx_d,
$$
where $d \in \{1,\ldots,n\}$
and $f,g \in A^{\otimes n}$.
Furthermore, 
\((\OO,\bullet, \pi _{i,n})\) is a clone
in $\Abop$. 
\end{theorem}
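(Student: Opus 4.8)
The plan is to first classify the set $\OO_n$ of ring maps $\phi\colon A\to A^{\otimes n}$ and then feed the classification into a direct verification of the clone axioms. A ring map $\phi$ is nothing but a pair $(T,X)=(\phi(t),\phi(x))$ in $A^{\otimes n}$ with $T^2=q$, $X^2=0$ and $TX=-XT$, the value $\phi(tx)=TX$ being then determined. At the outset I would record the structural facts to be used: $A$ is free over $\mathbb{Z}$ on $1,t,x,tx$, so $A^{\otimes n}$ is free on the monomials $t_Sx_U$; the total $x$-degree grades $A^{\otimes n}$, the ideal $I=(x_1,\dots,x_n)$ is nilpotent with $I^{n+1}=0$, and $A^{\otimes n}/I\cong R:=\mathbb{Z}[t_1,\dots,t_n]/(t_i^2-q)$ is commutative and \emph{reduced} (it is $\mathbb{Z}$-torsion-free and $R\otimes\mathbb{Q}\cong\mathbb{Q}(\sqrt q)^{\otimes n}$ is a product of fields).

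For the classification I would argue in three steps. First, reduction modulo $I$ sends $X$ to an element of $R$ squaring to $0$; as $R$ is reduced this forces $X\in I$. Second, $\bar T\in R$ satisfies $\bar T^2=q$, and I claim the only square roots of $q$ in $R$ are $\pm t_d$: writing $\bar T=\sum_S c_S t_S$ and evaluating through the $\mathbb{Q}$-algebra characters $\chi_\epsilon\colon t_i\mapsto\epsilon_i\sqrt q$ for $\epsilon\in\{\pm1\}^n$, each $\chi_\epsilon(\bar T)=\pm\sqrt q$; separating the rational and $\sqrt q$-parts and using that the $\chi_S$ are the orthonormal characters of $(\mathbb{Z}/2)^n$, Parseval gives $\sum_{|S|\text{ odd}}c_S^2\,q^{|S|-1}=1$ with $c_S=0$ for even $|S|$, and since $q\ge 2$ no term with $|S|\ge3$ survives, so $\bar T=\pm t_d$ for a single $d$. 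Third, to pin down the exact shape I would decompose $A^{\otimes n}=B\oplus Bx_d$, where $B$ is the subalgebra generated by all generators except $x_d$ (so $t_d$ is central in $B$) and $x_db=\alpha_d(b)x_d$ for the automorphism $\alpha_d$ negating $t_d$. Writing $T=\epsilon t_d+c+ex_d$ and $X=a+bx_d$ with $a,b,c,e\in B$ and $c\in I\cap B$, the three relations split into $B$-components; $T^2=q$ gives $c(c+2\epsilon t_d)=0$, and since $c$ is nilpotent and $t_d$ is invertible in $B\otimes\mathbb{Q}$ the factor $c+2\epsilon t_d$ is a non-zero-divisor, forcing $c=0$, after which $TX+XT=0$ degenerates to $2\epsilon t_da=0$ and hence $a=0$. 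This yields $T=\pm t_d+fx_d$, $X=gx_d$ with $f,g\in A^{\otimes n}$. Conversely, I would verify these are always ring maps: using $x_d^2=0$ and the localized identity $[t_d,w]x_d=0$ (valid for all $w$), one checks $X^2=0$, $T^2=q$ and $TX+XT=0$ with no constraint on $f,g$, and computes $TX=\pm g\,t_dx_d$, matching the stated value of $\phi(tx)$.

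For the clone structure I would first note that the $\pi_i$ are ring maps of the listed form and that axioms (1) and (2) of Definition~\ref{defclone} hold for \emph{any} ring: they reduce to the identity $\mu^{n-1}_{A^{\otimes n}}\circ(\pi_1\otimes\cdots\otimes\pi_n)=\id$ (distinct tensor slots commute) and to unitality of the $\psi_j$, independently of the classification. The genuine content is \emph{closure} — that $\varphi\bullet(\psi_1,\dots,\psi_m)$, defined by \eqref{eqn:defnbullet} through the non-multiplicative map $\mu^{m-1}$, lands back in $\OO_n$ — together with axiom (3). Once closure is in hand, axiom (3) is comparatively painless: both sides are \emph{ring maps} $A\to A^{\otimes l}$, so it suffices to compare their values on the two generators $t,x$, a finite computation driven by associativity of the iterated multiplications $\mu$.

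For closure I would set $\Phi:=(\psi_1\otimes\cdots\otimes\psi_m)\circ\varphi\colon A\to A^{\otimes mn}$, a ring map, so by the classification $\Phi(t)=\pm T_E+F'X_E$ and $\Phi(x)=G'X_E$ for a single global position $E$ (lying in block $c$ at coordinate $d_c$). Computing $\mu^{m-1}\circ\Phi$ on $t,x$ and normalizing with $x_E w=\alpha(w)x_E$ shows both images have the classified form with common index $d_c$; the one remaining point is the identity $\mu^{m-1}(\Phi(tx))=\mu^{m-1}(\Phi(t))\cdot\mu^{m-1}(\Phi(x))$, which makes $\mu^{m-1}\circ\Phi$ agree with the ring map it determines on generators. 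I expect \textbf{this closure identity to be the main obstacle}: because $A^{\otimes n}$ is noncommutative, $\mu^{m-1}$ is not a ring map and interleaves the tensor blocks differently on the two sides, and the equality is exactly where cocommutativity would ordinarily be invoked (cf.\ Theorem~\ref{notsogood}). What rescues the computation is that $\Phi(t)$ and $\Phi(x)$ are concentrated at the one position $E$ with $x_E^2=0$ and $x_E$ anticommuting only with $t_E$: this nilpotence and locality let one reorder the offending block-factors at the cost of signs that cancel, so the two bracketings coincide.
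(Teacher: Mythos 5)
Your classification of $\OO_n$ is correct, and it takes a genuinely different (valid) route from the paper's: the paper pins down $\pm t_d$ by extracting the coefficient of $1$ in $a^2=q$ and then kills the remaining terms by a nilpotency argument, whereas you reduce modulo the nilpotent ideal $I$, use reducedness of $R$ and a character/Parseval argument, and then split $A^{\otimes n}=B\oplus Bx_d$; your closure argument also coincides in substance with the paper's ($\tau_e$-lemma plus the computation of $\chi$ on $t_d$, $hx_d$, $ht_dx_d$). The genuine gap is the clone-axiom step. Axioms (1) and (2) of Definition~\ref{defclone} are indeed routine, but your claim that axiom (3) is ``comparatively painless'' once closure is in hand is wrong: superassociativity of the operation (\ref{eqn:defnbullet}) is \emph{not} a consequence of closure, and it fails outright for explicit elements of $\OO$, so the ``finite computation'' you defer cannot come out equal. (You mirror the paper here: its proof also reduces the whole second half to closure via ``it suffices to show that the map (\ref{eqn:defnbullet}) is a ring morphism'' and never verifies axiom (3); the step left implicit on both sides is exactly where the argument breaks.)

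Concretely, take $m=n=2$, $l=1$, and the maps
\[
\varphi(t)=t_2,\quad \varphi(x)=t_1x_2=t\otimes x,
\qquad
\psi_1=\pi_2,
\qquad
\psi_2(t)=t_1,\quad \psi_2(x)=t_2x_1=x\otimes t,
\]
with $\varphi,\psi_1,\psi_2\in\OO_2$ (all of your classified form, with coefficients $g=t_1$ resp.\ $g=t_2$), and $\rho_1=\rho_2=\id_A\in\OO_1$. Then $\psi_1\bullet(\id_A,\id_A)=\mu_A\circ\psi_1=\id_A$, and $\gamma:=\psi_2\bullet(\id_A,\id_A)=\mu_A\circ\psi_2$ satisfies $\gamma(t)=t$, $\gamma(x)=xt$, so
\[
\bigl[\varphi\bullet\bigl(\psi_1\bullet(\id_A,\id_A),\,\psi_2\bullet(\id_A,\id_A)\bigr)\bigr](x)
=\mu_A\bigl(t\otimes \gamma(x)\bigr)
=t\,(xt)=-qx,
\]
whereas
\[
\bigl[\varphi\bullet(\psi_1,\psi_2)\bigr](x)
=\psi_1(t)\,\psi_2(x)=(1\otimes t)(x\otimes t)=x\otimes t^2=q\,x_1,
\qquad
\bigl[\bigl(\varphi\bullet(\psi_1,\psi_2)\bigr)\bullet(\id_A,\id_A)\bigr](x)=qx .
\]
Both sides are ring maps, exactly as closure guarantees, but they are different ring maps: $-qx\neq qx$ since $A$ is free over $\mathbb{Z}$ and $q\ge 2$. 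The heuristic that rescued closure --- everything concentrated at one position, reordering costs only cancelling signs --- does not apply to axiom (3): the classification allows coefficients sitting in slots \emph{other} than the distinguished one (here $t_1$ in $\varphi(x)=t_1x_2$ and $t_2$ in $\psi_2(x)=t_2x_1$), and the two sides of axiom (3) multiply these past the generators in different orders; on one side the two $t$'s meet first (giving $q$), on the other a $t$ meets the $x$ first (giving a sign), and the discrepancy survives. So your proposal, like the paper's own reduction, does not establish that $(\OO,\bullet,\pi_i)$ satisfies axiom (3), and this computation shows that no completion of that step is possible for the structure as defined.
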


\begin{remark}\label{nonexample}
Taking $q$ to be $1$ yields the ring 
generated by $t,x$ satisfying  
$$
	t^2=1,\quad tx=-xt,\quad x^2=0.
$$
This is also a square zero extension of a
commutative ring, but its endomorphism
operad is not a clone. Indeed, this is a
Hopf algebra, namely the integral version of
Sweedler's 4-dimensional Hopf algebra, so
the claim follows from
Proposition~\ref{hopf}. 
Explicitly, the comultiplication is given
by  
$$
	\Delta \colon
	A \rightarrow A \otimes A, \,
	t \mapsto t \otimes t, \,
	x \mapsto 
	1 \otimes x+x \otimes t. 
$$
\end{remark}

\subsection{Proof of Theorem \ref{thm:EndAisaclone}}
As an abelian group,
$A^{\otimes n}$ is free with
basis   
$$
	t_Sx_T:=\prod_{i \in S} t_i \prod_{j
\in T} x_j,
$$
where $S,T$ run through all
pairs of subsets of 
$\{1,\ldots,n\}$. 
The product $t_Sx_Tt_Ux_V$
vanishes unless $T \cap
V=\emptyset$. In this case, 
$$
	t_Sx_T t_Ux_V =
	(-1)^{|T \cap U|} 
	q^{
	|S \cap U|}  
	t_{S 
	\triangle U}
	x_{T \cup V}.
$$ 

In order to describe the elements of 
$\OO_n$, we first show:

\begin{lemma}
The element 
$
	a = 
\sum_{ST}
	a_{ST} 
	t_Sx_T \in A^{\otimes n}
$
satisfies
$a^2=q$ if and only if 
$a=\pm t_d+fx_d$ for some 
$f \in A^{\otimes n}$. 
\end{lemma}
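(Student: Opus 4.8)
The plan is to separate the ``\(x\)-free'' and ``\(x\)-involving'' parts of \(A^{\otimes n}\) and treat them one at a time. Let \(R \subseteq A^{\otimes n}\) be the subring spanned by the \(t_S\) (equivalently, \(R \cong \mathbb{Z}[t_1, \ldots, t_n]/(t_i^2 - q)\), the image of \(A^{\otimes n}\) under the ring map sending every \(x_i\) to \(0\)), and let \(I\) be the complementary span of all \(t_S x_T\) with \(T \neq \emptyset\); this is a two-sided ideal, and it is nilpotent since a nonzero product of \(n+1\) of its basis elements would involve \(n+1\) of the \(n\) generators \(x_i\). Thus \(A^{\otimes n} = R \oplus I\) as abelian groups. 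Writing \(a = a_0 + a_1\) with \(a_0 \in R\), \(a_1 \in I\), we have \(a^2 = a_0^2 + (a_0 a_1 + a_1 a_0 + a_1^2)\) with \(a_0^2 \in R\) and the remaining terms in \(I\); since \(q \in R\), the equation \(a^2 = q\) is equivalent to \(a_0^2 = q\) in \(R\) together with \(a_0 a_1 + a_1 a_0 + a_1^2 = 0\).

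First I would classify the square roots of \(q\) inside \(R\). Grade \(R\) by the group of subsets of \(\{1, \ldots, n\}\) under symmetric difference (so \(\deg t_S = S\)); from \(t_S t_U = q^{|S \cap U|} t_{S \triangle U}\) one reads off that the degree-\(\emptyset\) part of \((\sum_S c_S t_S)^2\) is \(\sum_S c_S^2 q^{|S|}\). Since \(q\) lives entirely in degree \(\emptyset\), we must have \(\sum_S c_S^2 q^{|S|} = q\). As \(q \geq 2\) is not a square, a nonzero \(c_S\) with \(|S| \geq 2\) would already contribute \(q^2 > q\), so all such \(c_S\) vanish; then \(c_\emptyset^2 + q \sum_{|S| = 1} c_S^2 = q\) forces \(\sum_{|S|=1} c_S^2 \leq 1\), and the value \(0\) is impossible (it would give \(c_\emptyset^2 = q\)), leaving exactly one singleton coefficient equal to \(\pm 1\) and \(c_\emptyset = 0\). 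Hence \(a_0 = \pm t_d\) for a (unique) \(d \in \{1, \ldots, n\}\).

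With \(a_0 = \pm t_d\) fixed, using \(t_d^2 = q\) the second equation becomes \(\pm(t_d a_1 + a_1 t_d) + a_1^2 = 0\), and the goal is to show \(a_1 \in A^{\otimes n} x_d\) (so that \(a_1 = f x_d\)). I would reduce modulo the ideal \(A^{\otimes n} x_d\), which equals \(x_d A^{\otimes n}\) and is the span of the \(t_S x_T\) with \(d \in T\), because \(x_d\) commutes with every generator except \(t_d\), with which it anticommutes, while \(x_d^2 = 0\). In the quotient \(\bar A\) the class \(\bar t_d\) is central, so the relation becomes \(\pm 2\, \bar t_d \bar a_1 = -\bar a_1^2\). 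Now filter the nilpotent ideal \(\bar I \subseteq \bar A\) by the number of \(x\)'s: if \(\bar a_1 \neq 0\), pick the largest \(j \geq 1\) with \(\bar a_1\) in the \(j\)-th filtration step \(\bar I^{(j)}\). Then \(\bar a_1^2\) lies in step \(2j \geq j+1\), whereas multiplication by \(\bar t_d\) is injective on the associated graded \(\bar I^{(j)}/\bar I^{(j+1)}\) (it sends the basis vector \(t_S x_T\) to \(q^{|S \cap \{d\}|} t_{S \triangle \{d\}} x_T\), i.e.\ permutes basis vectors up to nonzero scalars, on a free abelian group), so the leading part of \(\bar t_d \bar a_1\) --- and hence of \(\pm 2 \bar t_d \bar a_1\), since the graded group is torsion-free --- is nonzero, a contradiction. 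Therefore \(\bar a_1 = 0\), \(a_1 = f x_d\), and \(a = \pm t_d + f x_d\).

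The converse is a direct computation: for \(a = \pm t_d + f x_d\) one has \(a^2 = t_d^2 \pm (t_d f x_d + f x_d t_d) + (f x_d)^2\), and \((f x_d)^2 = 0\) (move the two \(x_d\)'s together, past the generators up to sign, to create \(x_d^2\)); moreover \(f x_d t_d = -f t_d x_d\) gives \(t_d f x_d + f x_d t_d = (t_d f - f t_d) x_d\), and \(t_d f - f t_d\) is divisible by \(x_d\) because \(t_d\) commutes with the subalgebra generated by all the other generators, so this cross term vanishes as well; hence \(a^2 = t_d^2 = q\). I expect the main obstacle to be the third paragraph: setting up the quotient by \(x_d\) and the \(x\)-degree filtration carefully and confirming that no cancellation occurs in the leading term of \(\bar t_d \bar a_1\) (in particular that the factor \(2\) is harmless over \(\mathbb{Z}\)), since the noncommutativity makes it easy to make sign errors.
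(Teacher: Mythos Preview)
Your argument is correct and follows the same overall strategy as the paper: split $a$ into its $x$-free part $a_0$ and $x$-involving part $a_1$, use the degree-$\emptyset$ equation $\sum_S c_S^2 q^{|S|}=q$ to force $a_0=\pm t_d$, and then show that $a_1$ lies in $A^{\otimes n}x_d$. The only difference is in how you finish. The paper writes $a_1=fx_d+g$ with $g$ in the span of the $t_Sx_T$ with $d\notin T$, notes that $t_d$ and $g$ commute, obtains $(\pm 2t_d+g)g=0$, and then repeatedly multiplies by the ``conjugate'' factors $(\pm 2t_d-g)$, $(4q+g^2)$, \dots\ to reach $(4q)^Ng=0$, whence $g=0$ by torsion-freeness. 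Your route---pass to the quotient by $A^{\otimes n}x_d$, where $\bar t_d$ is central, and use the $x$-degree filtration to see that the leading part of $2\bar t_d\bar a_1$ cannot be cancelled by $\bar a_1^2$---uses exactly the same three ingredients (centrality of $t_d$ on the relevant piece, nilpotence, torsion-freeness) but packages them as a grading argument rather than an iterative algebraic trick. Either way works; your version is arguably a bit more transparent about why the factor $2$ causes no trouble over $\mathbb{Z}$.
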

\begin{proof}
That the given elements square
to $q$ is clear. Assume
conversely that 
$$
a^2=\sum_{\genfrac{}{}{0pt}{}{STUV}{T \cap V=\emptyset} 
	}
	a_{ST} a_{UV}
	(-1)^{|T \cap U|} 
	q^{|S \cap U|}  
	t_{S 
	\triangle U}
	x_{T \cup V}\\
=q.	
$$
Consider first the terms in
$ \mathbb{Z} $, that is, those with 
$T=V=\emptyset,S=U$:
$$
	\sum_{S}
	a_{S \emptyset}^2 
	q^{|S|}  
	=q.	
$$
As $q$ is assumed to be 
non-square, we have 
$a_{\emptyset\emptyset}^2 \neq
q$. However, the remaining
nonzero summands are each one
greater or equal to $q$. 
Hence there exists 
some $d \in \{1,\ldots,n\}$
such that $a_{\{d\}
\emptyset} = \pm 1$ while 
$a_{S \emptyset}=0$ for all 
other $S$. 

Thus $a$ can be uniquely written as 
$\pm t_d + fx_d + g$, where 
$f,g$ are linear combinations of 
$t_Sx_T$ with
$T \neq \emptyset$ but $d \notin
T$. 
It follows that 
$t_dfx_d+fx_dt_d=0$ and $(fx_d)^2=0$,  
so
$$
	a^2 =	
	q \pm 2t_dg + g^2 + (fx_dg+gfx_d).
$$ 
Therefore, $a^2=q$ implies 
$\pm 2 t_dg + g^2 + fx_dg+gfx_d = 0$.
The term $fx_dg+gfx_d$ is in the
ideal generated by $x_d$
while $ \pm 2 t_dg + g^2$ is a
linear combination of 
$t_Sx_T$ with $d \notin T$. 
Thus the two terms vanish
separately. 

Note that $t_d$ and $g$ commute and that
$g$ is nilpotent, so there exists $m \ge
0$ such that $g^m=0$. Thus we have 
\begin{align*}
& \ \pm 2t_d g + g^2 = (\pm 2 t_d + g) g = 0 \\
\Rightarrow 
& \ (4 q - g^2) g = (\pm 2 t_d - g)(\pm 2t_d  + g) g =0 \\
\Rightarrow 
& \ (16 q^2 - g^4 ) g = (4q + g^2) (4q -g^2) g = 0 \\
\Rightarrow 
& \ldots \\
\Rightarrow 
& \ 2^{2m} q^m g = 0
\end{align*}
so $g=0$. 
\end{proof}

\begin{lemma}
If $f \in A^{\otimes n}$ is arbitrary,
then 
$T:=\pm t_d + fx_d$ and 
$X \in A^{\otimes n}$ 
anticommute if and only if  
$X \in A^{\otimes
n}x_d$.
\end{lemma}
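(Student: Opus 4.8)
The plan is to exploit the splitting of $A^{\otimes n}$ according to whether the $d$-th generators occur. Let $B\subseteq A^{\otimes n}$ be the subring generated by $\{t_i,x_i: i\neq d\}$; it commutes with both $t_d$ and $x_d$, and sorting the basis $t_Sx_T$ of $A^{\otimes n}$ by whether $d\in S$ and $d\in T$ gives a direct sum of abelian groups
\[
A^{\otimes n}=B\oplus Bt_d\oplus Bx_d\oplus Bt_dx_d .
\]
First I would record the few elementary facts needed. Set $I:=A^{\otimes n}x_d$. Using $x_d^2=0$ and $t_dx_d=-x_dt_d$ one checks that $I=Bx_d\oplus Bt_dx_d$, that $x_d$ times any basis element again lies in $I$ so that $I$ is a two-sided ideal, that $x_dyx_d=0$ for every $y$ and hence $I^2=0$, and that $Ix_d=0$. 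All of this is a line of sign bookkeeping. In particular $w:=fx_d$ lies in $I$, and no further property of $f$ enters the argument — this is precisely why the statement holds for arbitrary $f$.

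The one computation worth doing explicitly is the anticommutator of $t_d$ with a general element. Writing $X=b_0+b_1t_d+b_2x_d+b_3t_dx_d$ with $b_i\in B$ and using $t_d^2=q$ together with $t_dx_d=-x_dt_d$, the $b_2$- and $b_3$-contributions cancel and one is left with
\[
t_dX+Xt_d=2b_0t_d+2qb_1\in Bt_d\oplus B .
\]
Now both directions follow quickly. If $X\in I$ then $b_0=b_1=0$, so $t_dX=-Xt_d$; writing $T=\varepsilon t_d+w$ with $\varepsilon=\pm1$ and $w\in I$, the products $wX$ and $Xw$ lie in $I^2=0$, whence $TX=\varepsilon t_dX=-\varepsilon Xt_d=-XT$. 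Conversely, suppose $TX+XT=0$. Then $wX+Xw\in I$, while $\varepsilon(t_dX+Xt_d)=\varepsilon(2b_0t_d+2qb_1)\in Bt_d\oplus B$; since $I=Bx_d\oplus Bt_dx_d$ meets $Bt_d\oplus B$ only in $0$, the direct sum decomposition forces $2b_0t_d+2qb_1=0$. As $A^{\otimes n}$ is a free abelian group, $B$ and $Bt_d$ are torsion-free, so $b_0=0$, and then $2qb_1=0$ gives $b_1=0$ because $q\neq0$; hence $X=b_2x_d+b_3t_dx_d\in I$.

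The only mildly delicate point is setting up the four-term decomposition $A^{\otimes n}=B\oplus Bt_d\oplus Bx_d\oplus Bt_dx_d$ and verifying that each product used above respects it in the stated form; once that bookkeeping is in place, the rest is a handful of sign computations, and the torsion-freeness invoked at the end is immediate from the description of $A^{\otimes n}$ by the basis $\{t_Sx_T\}$.
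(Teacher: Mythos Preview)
Your proof is correct and follows essentially the same approach as the paper's: both arguments split $A^{\otimes n}$ according to whether the basis elements $t_Sx_T$ have $d\in T$, observe that the $fx_d$-contributions to the anticommutator land in the $x_d$-ideal while the $t_d$-contributions do not, and finish by torsion-freeness. Your version is somewhat more explicit---you carry the four-term decomposition $B\oplus Bt_d\oplus Bx_d\oplus Bt_dx_d$ and compute $t_dX+Xt_d$ in coordinates, and you spell out the ``if'' direction via $I^2=0$---whereas the paper absorbs that direction into a ``without loss of generality'' and argues the converse by noting $t_d$ commutes with the $d\notin V$ part and then multiplying $t_dX=0$ by $t_d$.
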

\begin{proof}
Without loss of generality we may assume
$X$ is in the span of $t_Ux_V$ with $d
\notin V$. 
Then we have 
\begin{align*}
	0 &= 
	TX+XT \\
&= 
	\pm t_d X \pm X t_d +
	fx_dX+Xfx_d
	\\
&= 
	\pm 2 t_d X +
	fx_dX+Xfx_d.
\end{align*}
The last two terms are in the ideal
generated by $x_d$, so by our assumption
on $X$, the first term vanishes
separately, hence $t_dX=0$. Multiplication by
$t_d$ yields the claim. 
\end{proof}

As the elements of $A^{\otimes n} x_d$ all
square to zero, the above two lemmata
imply the first half of
Theorem~\ref{thm:EndAisaclone}. 

To prove the second half, 
it suffices  to show that the 
$ \mathbb{Z} $-linear map defined in
(\ref{eqn:defnbullet})
is a ring morphism.

Let us start with the following observation:
\begin{lemma}
			Let \(d \in \{1,\ldots,n\}\) and 
	$ \tau_{d} \colon A^{\otimes n} \rightarrow 
A^{\otimes n}$ be the unique ring 
morphism such that $ \tau_{d} (t_d)=-t_d, \tau_{d}
(x_d)=0$, $ \tau_{d} (t_i) = t_i $ and 
$ \tau_{d} (x_i) = x_i $ for $i \neq d$. Then
for all \(a \in A^{\otimes n}\), we have 
			\(x_{d} a = \tau_{d}(a) x_{d}\) and
			\(t_{d} \tau_{d} ( a) = \tau_{d} (a)
t_{d}\).
\end{lemma}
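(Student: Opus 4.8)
The plan is to establish both identities by the standard subring test. First I would record that \(\tau_d\) is well defined: since \(A^{\otimes n}\) is presented by the generators \(t_1,\dots,t_n,x_1,\dots,x_n\) modulo the relations displayed above, it is enough to check that the prescribed images satisfy these relations, and the only ones involving the \(d\)-th generators (\(t_d^2=q\), \(t_dx_d=-x_dt_d\), \(x_d^2=0\), and \(t_dt_i=t_it_d\), \(t_dx_i=x_it_d\), \(t_ix_d=x_dt_i\), \(x_dx_i=x_ix_d\) for \(i\neq d\)) are manifestly preserved once \(t_d\) only picks up a sign and \(x_d\) is sent to \(0\); uniqueness is automatic, as a ring morphism is determined by its values on generators.

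For the identity \(x_da=\tau_d(a)x_d\), I would put \(S:=\{a\in A^{\otimes n}: x_da=\tau_d(a)x_d\}\). It contains \(1\), is closed under addition since \(\tau_d\) is additive, and is closed under multiplication by the telescoping computation
\[
x_d(ab)=(x_da)b=\tau_d(a)(x_db)=\tau_d(a)\tau_d(b)x_d=\tau_d(ab)x_d
\]
valid for \(a,b\in S\). It also contains the generators: \(x_dt_d=-t_dx_d=\tau_d(t_d)x_d\) and \(x_dx_d=0=\tau_d(x_d)x_d\), while for \(i\neq d\) one has \(x_dt_i=t_ix_d=\tau_d(t_i)x_d\) and \(x_dx_i=x_ix_d=\tau_d(x_i)x_d\). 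Hence \(S=A^{\otimes n}\).

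For the identity \(t_d\tau_d(a)=\tau_d(a)t_d\), the cleanest route is to observe that \(\tau_d(a)\) always lies in the subring \(R_d\) generated by \(\{t_1,\dots,t_n\}\cup\{x_i: i\neq d\}\) — because \(\tau_d\) kills \(x_d\) and sends every other generator into this set up to sign — and that \(t_d\) commutes with each of these generators (with the \(t_i\) since the \(t\)'s commute, and with \(x_i\) for \(i\neq d\) since \(t_ix_j=x_jt_i\) when \(i\neq j\)). Thus \(t_d\) is central in \(R_d\), which is the assertion; alternatively one reruns the subring argument verbatim with \(S':=\{a: t_d\tau_d(a)=\tau_d(a)t_d\}\).

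I do not anticipate a genuine obstacle here; the only delicate points are the well-definedness of \(\tau_d\) and not confusing \(t_ix_i=-x_it_i\) with \(t_ix_j=x_jt_i\) for \(i\neq j\). Should a proof in the more computational spirit of the preceding lemmas be preferred, one can instead expand \(a\) in the \(\mathbb{Z}\)-basis \(\{t_Sx_T\}\), use that \(\tau_d(t_Sx_T)=\pm t_Sx_T\) with sign \(-1\) precisely when \(d\in S\) in case \(d\notin T\), and \(\tau_d(t_Sx_T)=0\) in case \(d\in T\), and then verify both equalities termwise by means of the product rule \(t_Sx_Tt_Ux_V=(-1)^{|T\cap U|}q^{|S\cap U|}t_{S\triangle U}x_{T\cup V}\) when \(T\cap V=\emptyset\) (and \(0\) otherwise).
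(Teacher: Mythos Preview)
Your proof is correct. Your primary route—the subring test on \(S=\{a:x_da=\tau_d(a)x_d\}\) and the observation that \(\operatorname{im}\tau_d\) lies in the centraliser of \(t_d\)—differs from the paper's argument, which instead checks both identities termwise on the \(\mathbb{Z}\)-basis \(\{t_Sx_T\}\) after computing \(\tau_d(t_Sx_T)\) explicitly (your final paragraph sketches exactly this). Your approach has the virtue of not relying on the explicit basis description and of making the multiplicativity transparent; the paper's approach is more hands-on and ties in with the combinatorial product formula already established. Either way the verification is routine.
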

\begin{proof}
By linearity of both expressions,
it suffices to verify the claim for 
$a=  t_Sx_T$ which is straightforward; we
have $ \tau_{d} (t_Sx_T) = 0$ if $d \in T$, 
have $ \tau_{d} (t_Sx_T) = -t_Sx_T$ if $d \in
S$, and $\tau_{d}(t_Sx_T) =t_Sx_T$ otherwise.
\end{proof}
\begin{lemma}
Let \(d \in \{1,\ldots, m\}\), 
			\(\psi_{1}, \ldots,
\psi_{m} \colon A \to A^{\otimes n}\) be
ring morphisms and assume that 
\(e \in \{1, \ldots, n\}\) and \(f \in
A^{\otimes n}\) are such that 
\(\psi_{d}(t) \coloneqq t_{e} + f x_{e}\).
Then,
\begin{enumerate}
	\item \(\mu^{m-1} \circ 
			(\psi_{1} \otimes \cdots \otimes \psi_{m})
			(t_{d}) = t_{e} + fx_{e} \).
		\item For any \(h \in A^{\otimes m}\),
		there exists an element
		\(h' \in A^{\otimes n}\) with
		\begin{align*}
			\mu^{m-1} \circ 
			(\psi_{1} \otimes \cdots \otimes \psi_{m})
			(h x_{d})
			& = h' x_{e} \\
			\mu^{m-1} \circ 
			(\psi_{1} \otimes \cdots \otimes \psi_{m})
			(h t_{d}x_{d})
			& = h' t_{e}x_{e}.
		\end{align*}
\end{enumerate}
\end{lemma}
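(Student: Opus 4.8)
The plan is to exploit the block structure of the codomain of $\Psi \coloneqq \psi_1 \otimes \cdots \otimes \psi_m \colon A^{\otimes m} \to (A^{\otimes n})^{\otimes m}$. Identifying the target with $m$ pairwise commuting copies (``blocks'') of $A^{\otimes n}$, the map $\mu^{m-1}$ is simply the operation of multiplying these $m$ blocks together in order. Since each $\psi_j$ is a ring morphism it preserves units, so $\Psi(t_d)$, $\Psi(x_d)$ and $\Psi(t_d x_d)$ are supported in block $d$ alone, carrying there $\psi_d(t)$, $\psi_d(x)$ and $\psi_d(t)\psi_d(x) = \psi_d(tx)$ respectively. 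Hence part (1) is immediate: $\mu^{m-1}(\Psi(t_d)) = \psi_d(t) = t_e + f x_e$.

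Before part (2) I would pin down $\psi_d(x)$ and $\psi_d(tx)$. From $tx = -xt$ in $A$ one gets $\psi_d(t)\psi_d(x) = -\psi_d(x)\psi_d(t)$, and since $\psi_d(t) = t_e + f x_e$ has the form $\pm t_e + f x_e$, the lemma characterising the elements that anticommute with such an element forces $\psi_d(x) = g x_e$ for some $g \in A^{\otimes n}$. Using $x_e^2 = 0$ together with the identities $x_e a = \tau_e(a) x_e$ and $t_e \tau_e(a) = \tau_e(a) t_e$ from the preceding lemma, a short computation gives $\psi_d(t)\psi_d(x) = t_e g x_e$ and $-\psi_d(x)\psi_d(t) = g t_e x_e$; equating these shows $\psi_d(tx) = t_e g x_e = g t_e x_e$, and I would use the second form.

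For part (2), I would write $\Psi(h) = \sum_\alpha a_1^\alpha \otimes \cdots \otimes a_m^\alpha$ with $a_i^\alpha \in A^{\otimes n}$ (possible as the codomain of $\Psi$ is a finite tensor product of rings) and set $b^\alpha \coloneqq a_1^\alpha \cdots a_d^\alpha$ and $c^\alpha \coloneqq a_{d+1}^\alpha \cdots a_m^\alpha$ in $A^{\otimes n}$. Right-multiplying $\Psi(h)$ by $\Psi(x_d)$ changes only block $d$, replacing $a_d^\alpha$ by $a_d^\alpha \psi_d(x)$, so applying $\mu^{m-1}$ and then sliding $x_e$ to the right past $c^\alpha$ via $x_e c^\alpha = \tau_e(c^\alpha) x_e$ gives
\[
 \mu^{m-1}(\Psi(h x_d)) = \sum_\alpha b^\alpha \psi_d(x) c^\alpha = \sum_\alpha b^\alpha g x_e c^\alpha = \left(\sum_\alpha b^\alpha g \tau_e(c^\alpha)\right) x_e .
\]
Set $h' \coloneqq \sum_\alpha b^\alpha g \tau_e(c^\alpha) \in A^{\otimes n}$. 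Running the same computation with $\psi_d(x)$ replaced by $\psi_d(tx) = g t_e x_e$, and using in addition $t_e \tau_e(c^\alpha) = \tau_e(c^\alpha) t_e$, gives
\[
 \mu^{m-1}(\Psi(h t_d x_d)) = \sum_\alpha b^\alpha g t_e x_e c^\alpha = \sum_\alpha b^\alpha g \tau_e(c^\alpha) t_e x_e = h' t_e x_e ,
\]
so both identities hold with this single $h'$.

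The main thing to be careful about --- and where a naive argument breaks --- is that $\mu^{m-1} \circ \Psi$ is \emph{not} a ring morphism (indeed whether such a composite is a ring morphism is precisely the content of Theorem~\ref{thm:EndAisaclone}), so one may not distribute it across the product $h \cdot x_d$. The argument instead relies only on the two structural facts that $\Psi$ respects the block decomposition and that distinct blocks commute in $(A^{\otimes n})^{\otimes m}$; the only non-commutativity left to handle is the transport of the lone factors $x_e$ and $t_e$ past $c^\alpha$, which is exactly what the lemma on the endomorphisms $\tau_e$ provides.
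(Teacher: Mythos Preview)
Your proof is correct and follows essentially the same route as the paper: both arguments reduce to writing the image of $h$ (you decompose $\Psi(h)$ in the target, the paper decomposes $h$ in the source, which is equivalent), isolating the factor $\psi_d(x)=gx_e$ or $\psi_d(tx)=gt_ex_e$ in block $d$, and then using the identities $x_e c = \tau_e(c)x_e$ and $t_e\tau_e(c)=\tau_e(c)t_e$ to push $x_e$ (respectively $t_ex_e$) to the right. Your explicit re-derivation of $\psi_d(tx)=gt_ex_e$ is a nice self-contained touch, but otherwise the two proofs are the same computation.
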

\begin{proof}
\begin{enumerate}
	\item We have
\begin{align*}
	(\psi_{1} \otimes \cdots \otimes \psi_{m})
	(t_{d}) & = 
	(\psi_{1} \otimes \cdots
	\otimes \psi_{m})
	(1 \otimes \cdots
	\otimes t
	\otimes \cdots
	\otimes 1) \\
	& =
	\psi_{1}(1) \otimes \cdots
	\otimes \psi_{d}(t)
	\otimes \cdots
	\otimes \psi_{m}(1) \\
	& = 1 \otimes \cdots
	\otimes \psi_{d}(t)
	\otimes \cdots
	\otimes 1,
\end{align*}
hence
\[
	\mu^{m-1} \circ
	(\psi_{1} \otimes \cdots \otimes \psi_{m})
	(t_{d}) = 
	\psi_{d}(t) = t_{e} + f x_{e}.
\] 

	\item Let \(g \in A^{\otimes n}\) such that
\(\psi_{d}(x) \coloneqq g x_{e}\)
and 
\(\psi_{d}(tx) \coloneqq g t_{e} x_{e}\).
		Without loss of generality, we may assume that
		\(h = h_1 \otimes \cdots \otimes h_{m}\)
		for some \(h_{1}, \ldots, h_{m} \in A\).
		We have
		\begin{align*}
			(\psi_{1} \otimes \cdots \otimes \psi_{m})
			(h x_{d})
			& =
			(\psi_{1} \otimes \cdots \otimes \psi_{m})
			(h_1 \otimes \cdots 
			\otimes h_{d} x
			\otimes \cdots
			\otimes h_{m}) \\
			&=
			\psi_{1}(h_{1}) 
			\otimes \cdots \otimes 
			\psi_{d}(h_{d} x)
			\otimes \cdots \otimes 
			\psi_{m}(h_{m}) \\
			& = 
			\psi_{1}(h_{1}) 
			\otimes \cdots \otimes 
			\psi_{d}(h_{d}) 
			g x_{e}
			\otimes \cdots \otimes 
			\psi_{m}(h_{m}) 
		.\end{align*}
		Then
		\[
			\mu^{m-1} \circ
			(\psi_{1} \otimes \cdots \otimes \psi_{m})
			(h x_{d})
			 =
			a x_{e} b
			=
			a \tau_{e}(b) x_{e},
		\]
		where
		\[
		a = 
			\psi_{1}(h_{1}) 
			\cdots 
			\psi_{d}(h_{d}) 
			g,
			\quad \text{and} \quad
		b = 
			\psi_{d+1}(h_{d+1})
			\cdots 
			\psi_{m}(h_{m}).
		\] 
Here \(\tau_{e}\) is as in the previous lemma.

		Following the same process, we obtain that
\[
			(\psi_{1} \otimes \cdots \otimes \psi_{m})
			(h t_{d}x_{d})
			=
			\psi_{1}(h_{1}) 
			\otimes \cdots \otimes 
			\psi_{d}(h_{d}) 
			g t_{e}x_{e}
			\otimes \cdots \otimes 
			\psi_{m}(h_{m}),
		\]
		hence
		\[
			\mu^{m-1} \circ
			(\psi_{1} \otimes \cdots \otimes \psi_{m})
			(h t_{d}x_{d})
			= a t_{e} x_{e} b
			= a \tau_{e}(b) t_{e} x_{e}.
		\]
\end{enumerate}
\end{proof}

With this we are ready to prove the 
second half of Theorem \ref{thm:EndAisaclone}:
\begin{proof}[End of proof of Theorem \ref{thm:EndAisaclone}]
Let \(\varphi \colon A \to A^{\otimes m}\)
and
\(\psi_{1}, \ldots, \psi_{m} \colon A \to A^{\otimes n}\)
be ring morphisms, and let \(\chi \coloneqq 
	\mu^{m-1} \circ 
	(\psi _1 \otimes \cdots \otimes 
	\psi _m)\).
	Our goal is to show that the \(\mathbb{Z}\)-linear map
\(\chi \circ \varphi\)
is again a ring morphism. 
Suppose that
\[
		\varphi(t) = \pm t_{d} + f x_{d},
		\quad
		\varphi(x) = g x_{d},
		\quad
		\varphi(tx) = g t_{d} x_{d}.
\] 
for some \(d \in \{1, \ldots, m\}\) 
and \(f,g \in A^{\otimes m}\),
and that \(\psi_{d}(t) \coloneqq  t_{e} + hx_{e}\) 
for some \(e \in \{1,\ldots,n\}\) and  \(h \in A^{\otimes n}\).

Then
\begin{align*}
	\chi \circ \varphi(t)
	& = \pm \chi(t_{d}) + \chi(f x_{d})
	= \pm t_{e} + h x_{e} + f' x_{e}, \\
	\chi \circ \varphi(x) 
	& = \chi(g x_{d})
	= g' x_{e}, \\
	\chi \circ \varphi(tx)
	& = \chi(g t_{d}x_{d})
	= g' t_{e}x_{e},
\end{align*}
where \(f',g' \in A^{\otimes n}\) are as in the previous lemma,
hence \(\chi \circ \varphi\) is a ring morphism.
\end{proof}


\section{Appendix: Counital comagmas}
In this appendix we prove many
of the preliminary results
on comagmas using the graphical
language of string diagrams
when representing morphisms 
in $\C$. We read these from
top to bottom, so a
comultiplication 
$ \Delta_X \colon X \rightarrow
X \otimes X$, a counit 
$ \varepsilon_X \colon X
\rightarrow \one$ and the
braiding $ \sigma _{X,Y}
\colon X \otimes Y \rightarrow
Y \otimes X$ will be depicted
as follows: 
$$
\varepsilon_{X} \coloneqq 

\).
        \end{center}
\end{proof}

\setcounter{theorem}{16}
\begin{proposition}
  Let
  \(
    X_{1} \coloneqq~(X, \Delta_{1}, \varepsilon_{1})
    \) and
\(X_{2} \coloneqq~(X, \Delta_{2}, \varepsilon_{2})
  \)
   be two counital comagma structures
   defined
   on the same underlying object \(X\).
   If \(\Delta_{2}\) is a morphism of
   comagmas \(X_{1}
\rightarrow X_{1} \otimes X_{1}\),
   then
\[
   \varepsilon_{1} =
   \varepsilon_{2},
   \quad
   \Delta_{1} =
   \Delta_{2},
\]
      and \(X_1=X_2\)
      is a cocommutative
comonoid.
\end{proposition}

\begin{proof}
  \label{eckmannhiltonproof}
    Let \((\Delta_1, \varepsilon_1) =\) 
    \( 
\left(

\).
 \end{center}
\end{proof}

\end{document}